\pdfoutput=1
\RequirePackage{ifpdf}
\ifpdf 
\documentclass[pdftex]{sigma}
\else
\documentclass{sigma}
\fi

\usepackage{tikz, enumerate, braids}
\usepackage{multirow}
\usepackage[all]{xy}
\DeclareMathOperator{\spp}{supp}

\newcommand{%
\noindent\mbox{%
\begin{minipage}{5cm}
\resizebox{5cm}{5cm}{
\input{}}%
\begin{center}

\end{center}
\end{minipage}}
}[3][5cm]{%
\noindent\mbox{%
\begin{minipage}{#1}
\resizebox{#1}{#1}{
\input{#2}}%
\begin{center}
#3
\end{center}
\end{minipage}}
}

\numberwithin{equation}{section}

\newtheorem{Theorem}{Theorem}[section]
\newtheorem{Corollary}[Theorem]{Corollary}
\newtheorem{Lemma}[Theorem]{Lemma}
\newtheorem{Proposition}[Theorem]{Proposition}
 { \theoremstyle{definition}
\newtheorem{Definition}[Theorem]{Definition}
\newtheorem{Example}[Theorem]{Example}
\newtheorem{Remark}[Theorem]{Remark} }

\begin{document}

\allowdisplaybreaks

\newcommand{\arXivNumber}{1804.09603}

\renewcommand{\PaperNumber}{134}

\FirstPageHeading

\ShortArticleName{A Product on Double Cosets of $B_\infty$}

\ArticleName{A Product on Double Cosets of $\boldsymbol{B_\infty}$}

\Author{Pablo GONZALEZ PAGOTTO}

\AuthorNameForHeading{P.~Gonzalez Pagotto}

\Address{Institut Fourier, Universit\'e Grenoble Alpes, Grenoble, France}
\Email{\href{mailto:pablo.gonzalez-pagotto@univ-grenoble-alpes.fr}{pablo.gonzalez-pagotto@univ-grenoble-alpes.fr}}
\URLaddress{\url{https://www-fourier.ujf-grenoble.fr/~gonzpabl/}}

\ArticleDates{Received May 28, 2018, in final form December 14, 2018; Published online December 27, 2018}

\Abstract{For some infinite-dimensional groups $G$ and suitable subgroups $K$ there exists a~monoid structure on the set $K\backslash G/K$ of double cosets of $G$ with respect to $K$. In this paper we show that the group $B_\infty$, of the braids with finitely many crossings on infinitely many strands, admits such a structure.}

\Keywords{Braid group; double cosets; Burau representation}

\Classification{20F36; 20M99; 20C99}

\section{Introduction}

\subsection{Motivation}
 For some infinite-dimensional groups $G$ and suitable subgroups $K$ there exists a monoid structure on the set $K\backslash G /K$ of double cosets of $G$ with respect to $K$. This can be seen, for example, for the group $S_\infty$ of the finitely supported permutations of $\mathbb{N}$, for infinite-dimensional classical Lie groups, for groups of automorphisms of measure spaces and for $\operatorname{Aut}(F_\infty)$, a direct limit of the groups of automorphisms of the free groups $F_n$.

 The study of these structures was pioneered by R.S.~Ismagilov, followed by G.I.~Ol'shanski, who used them in the representation theory of infinite-dimensional classical Lie groups \cite{okounkov, olsh, olsh2, olsh3, olsh5, olsh6}. More recently there is the work of Yu.A.~Neretin for the infinite tri-symmetric group and $\operatorname{Aut}(F_\infty)$ \cite{neretin5, neretin,neretin6, neretin2}.

 In this paper we show that the group $B_\infty$, of the finite braids on infinitely many strands, admits such a structure. Furthermore, we show how this multiplicative structure is related to similar constructions in $\operatorname{Aut}(F_\infty)$ and ${\rm GL}(\infty)$. We also define a one-parameter generalization of the usual monoid structure on the set of double cosets of ${\rm GL}(\infty)$ (see \cite{neretin3, neretin8}) and show that the Burau representation provides a functor between the categories of double cosets of~$B_\infty$ and~${\rm GL}(\infty)$.

\subsection{The infinite braid group and double cosets}
The \emph{Artin braid group} on $n$ strings $B_n$ \cite{artin, dehornoy, geckpf} has the presentation with $n-1$ generators $\sigma_1, \sigma_2, \ldots, \sigma_{n-1}$ and the so-called \emph{braid relations}:
\begin{gather*}
\sigma_i\sigma_j = \sigma_j\sigma_i, \qquad \vert i-j \vert \geq 2,\qquad i,j \in \{1,\ldots,n-1\},
\end{gather*}and \begin{gather*}
\sigma_i\sigma_{i+1}\sigma_i = \sigma_{i+1}\sigma_i\sigma_{i+1}, \qquad 1\leq i \leq n-2.
\end{gather*}
The generators $\sigma_i$ are called \emph{elementary braids}. For each $n$, consider the monomorphism $i_n\colon B_n \to B_{n+1}$ sending the $k$-th elementary braid of $B_n$ to the $k$-th elementary braid of $B_{n+1}$. Geometrically this operation corresponds to adding a new string to the right of the braid, without creating any new crossings, as in the picture below:
\begin{figure}[h!]\centering
\begin{tikzpicture}
\draw (1,-.5)--(1,1.5);
\draw (2,-.5)--(2,1.5);
\draw (2.5,-.5)--(2.5,1.5);
\draw (3,-.5)--(3,1.5);
\draw (3.5,-.5)--(3.5,1.5);

\draw[fill=black!10] (.9,0) rectangle (3.6,1);

\draw (1.5,1.25) node{\ldots};
\draw (1.5,-.25) node{\ldots};
\draw (2.25,.5) node{braid};
\draw[->] (4,.5)--(4.5,.5);

\begin{scope}[shift={(4,0)}]
\draw (1,-.5)--(1,1.5);
\draw (2,-.5)--(2,1.5);
\draw (2.5,-.5)--(2.5,1.5);
\draw (3,-.5)--(3,1.5);
\draw (3.5,-.5)--(3.5,1.5);

\draw (4,-.5)--(4,1.5);

\draw[fill=black!10] (.9,0) rectangle (3.6,1);

\draw (1.5,1.25) node{\ldots};
\draw (1.5,-.25) node{\ldots};
\draw (2.25,.5) node{braid};
\end{scope}
\end{tikzpicture}
\caption{The monomorphism $i_n$.}
\end{figure}
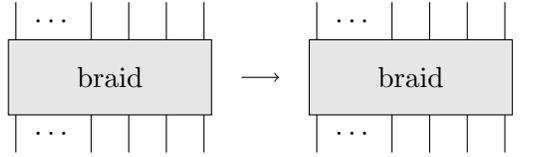

The direct limit of this sequence of groups, with respect to the homomorphisms $i_n$, is the \emph{infinite braid group} \begin{gather*}
B_\infty = \lim_{\longrightarrow} B_n,
\end{gather*}
consisting of braids with countably many strings and finitely many crossings. This group has the presentation \begin{gather*}
B_\infty = \left\langle \begin{array}{@{}l|c@{}}
\multirow{2}{*}{$\sigma_i, i\in \mathbb{N}$}&\sigma_i\sigma_j = \sigma_j\sigma_i, \vert i-j \vert \geq 2\\
& \sigma_i\sigma_{i+1}\sigma_i = \sigma_{i+1}\sigma_i\sigma_{i+1} \end{array}\right\rangle.
\end{gather*}
For each non-negative integer $\alpha$, let $B_\infty[\alpha]$ be the subgroup of $B_\infty$ given by
\begin{gather*}
B_\infty[\alpha] = \langle\sigma_j\,|\,j>\alpha\rangle.
\end{gather*}

\begin{Definition}Let $G$ be a group, $g\in G$ and $K$ and $L$ be subgroups of $G$. The double coset on $G$ containing $g$ with respect to the pair $(K,L)$ is the set $KgL$. Denote by $K\backslash G/L$ the set of double cosets on $G$ with respect to the pair $(K,L)$.
\end{Definition}

\subsection[The Burau representation of $B_\infty$]{The Burau representation of $\boldsymbol{B_\infty}$}\label{sec:burau}
The Burau representation \cite{birman, birble} is the homomorphism $\eta_n\colon B_n\to {\rm GL}\big(n,\mathbb{Z}\big[t,t^{-1}\big]\big)$ given by \begin{gather*}
\eta_n(\sigma_i)= \begin{pmatrix}
{1}_{i-1}&&\\
&\begin{pmatrix}
1-t&t\\
1&0
\end{pmatrix}&\\
&&{1}_{n-i-1}
\end{pmatrix}.
\end{gather*}
Denote ${\rm GL}\big(n,\mathbb{Z}\big[t,t^{-1}\big]\big)$ by ${\rm GL}(n)$ and consider the homomorphisms $j_n\colon {\rm GL}(n) \to {\rm GL}(n+1)$ given by \begin{gather*}
j_n(T) = \begin{pmatrix}
T&0\\
0&1
\end{pmatrix}.
\end{gather*} The group ${\rm GL}(\infty)$ is the direct limit of ${\rm GL}(n)$ with respect to the homomorphisms $j_n$ and consists of infinite matrices that differ from the identity matrix only in finitely many entries. Due to the commutativity of the diagram \begin{gather*}
\xymatrix{B_n\ar[r]^-{\eta_n}\ar[d]_{i_n}&{\rm GL}(n)\ar[d]^{j_n}\\
B_{n+1}\ar[r]_-{\eta_{n+1}}&{\rm GL}(n+1)}
\end{gather*} we can construct a representation $\eta\colon B_\infty \to {\rm GL}(\infty)$ of $B_\infty$ by taking the limit of the representations $\eta_n$. More precisely, $\eta$ is given by the following formulas: \begin{gather*}
\eta(\sigma_i)= \begin{pmatrix}
{1}_{i-1}&&\\
&\begin{pmatrix}
1-t&t\\
1&0
\end{pmatrix}&\\
&&{1}_{\infty}
\end{pmatrix}.
\end{gather*}
With this representation in mind, we will define an operation on double cosets of ${\rm GL}(\infty)$ such that the Burau representation will be functorial between the categories of double cosets.

\subsection{Main results}

Consider the double cosets on $B_\infty$ with respect to the subgroups $B_\infty[\alpha]$. Given double cosets $\mathfrak{p}\in B_\infty[\alpha]\backslash B_\infty/B_\infty[\beta]$ and $\mathfrak{q}\in B_\infty[\beta]\backslash B_\infty/B_\infty[\gamma]$, we are going to define an element $\mathfrak{p\circ q}\in B_\infty[\alpha]\backslash B_\infty/B_\infty[\gamma]$. To this purpose, we first introduce the following:

\begin{Definition}
For integers $\beta\geq 0$ and $n> 0$, denote by $\tau_i^{(n)}$ the braid \begin{gather*}
\tau_i^{(n)} = \sigma_{n+\beta+i}\sigma_{n+\beta+i-1}\cdots\sigma_{\beta+i+1}.
\end{gather*} Further we define the element $\theta_n[\beta]\in B_\infty[\beta]$ as \begin{gather*}
\theta_n[\beta] = \tau_0^{(n)}\tau_1^{(n)}\cdots\tau_{n-1}^{(n)}.
\end{gather*}
\end{Definition}

\begin{figure}[h!]\centering
\begin{tikzpicture}[scale=0.7]
\draw[rounded corners=3pt, fill=blue!40, draw=none] (9.95,-2.2)--(11.05,-2.2)--(11.05,-3.3)--(7.05,-7.3)--(5.95,-7.3)--(5.95,-6.2)--cycle;
\braid
s_{8}
s_{7}-s_{9}
s_{6}-s_{8}-s_{10}
s_{5}-s_{7}-s_{9}-s_{11}
s_{4}-s_{6}-s_{8}-s_{10}-s_{12}
s_{5}-s_{7}-s_{9}-s_{11}
s_{6}-s_{8}-s_{10}
s_{7}-s_{9}
s_{8}
;
\end{tikzpicture}
\caption{The element $\theta_5[3]$. The highlighted region corresponds to $\tau_2^{(5)}$.}
\end{figure}
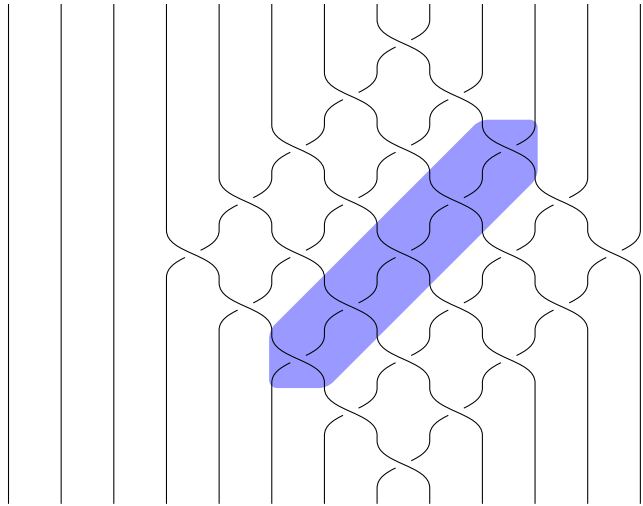

Finally, the definition of the product of the double cosets is as follows:

\begin{Definition}Let $\mathfrak{p}\in B_\infty[\alpha]\backslash B_\infty /B_\infty[\beta]$ and $\mathfrak{q}\in B_\infty[\beta]\backslash B_\infty/B_\infty[\gamma]$ be double cosets. Consider $p\in \mathfrak{p}$ and $q\in\mathfrak{q}$ representatives of these double cosets. Then we define their product as \begin{gather*}
\mathfrak{p\circ q} = B_\infty[\alpha] p\theta_n[\beta]q B_\infty[\gamma],
\end{gather*} for sufficiently large $n$.\label{def:prodnavy}
\end{Definition}

\begin{Remark} The introduction of the element $\theta_k[\beta]$ is essential for our construction of a~product on the set of double cosets of $B_\infty$. In fact, for $\mathfrak{p}\in B_\infty[\alpha]\backslash B_\infty/B_\infty[\beta]$ and $\mathfrak{q}\in B_\infty[\beta]\backslash B_\infty/B_\infty[\gamma]$, let $p\in \mathfrak{p}$ and $q\in \mathfrak{q}$ be representatives of these double cosets. The ``naive'' product $ B_\infty[\alpha] pq B_\infty[\gamma]$ does not always coincide for all choices of $p$ and $q$. For instance $\sigma_2$ and $\sigma_3\sigma_2$ are representatives of the same double coset in $B_\infty[2]\backslash B_\infty/B_\infty[2]$. But $\sigma_2^2$ and $\sigma_3\sigma_2\sigma_3\sigma_2$ represent distinct cosets. In order to see this, we consider the permutation associated to each braid. For the braid $\sigma_2^2$ it is the identity and for the braid $\sigma_3\sigma_2\sigma_3\sigma_2$ it is $(432)$. Since no braid in~$B_\infty[2]$ permutes the point~$2$, we see that these are in fact distinct double cosets.

However, if we introduce an intermediary braid $\theta_k[\beta]$ that ``forces apart'' the braids $p$ and $q$, the double coset $B_\infty[\alpha] p\theta_k[\beta]q B_\infty[\gamma]$ becomes independent of $k$ for $k$ large enough and its limit does not depend on the choice of the representatives for $\mathfrak{p}$ and $\mathfrak{q}$.
\end{Remark}

\begin{Theorem}\label{theorem:opbr}The operation defined above does not depend on the choice of the representatives of the double cosets for~$n$ large enough. Moreover, it is associative.
\end{Theorem}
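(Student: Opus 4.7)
The central tool is a commutation lemma for $\theta_n[\beta]$, grounded in the fact that the underlying permutation of $\theta_n[\beta]$ is the involution that swaps the two adjacent blocks of $n$ consecutive strands $\{\beta+1,\ldots,\beta+n\}$ and $\{\beta+n+1,\ldots,\beta+2n\}$ while fixing all other strands. \textbf{Key lemma.} If $b\in B_\infty[\beta]$ can be written using only the generators $\sigma_{\beta+1},\ldots,\sigma_{\beta+m}$, then for every $n\geq m+1$,
\begin{equation*}
\theta_n[\beta]^{-1}\,b\,\theta_n[\beta]\in\langle\sigma_{\beta+n+1},\ldots,\sigma_{\beta+n+m}\rangle\subseteq B_\infty[\beta+n],
\end{equation*}
and symmetrically for $\theta_n[\beta]\,c\,\theta_n[\beta]^{-1}$. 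The plan for the lemma is to reduce to the case of a single generator, where the identity $\theta_n[\beta]^{-1}\sigma_{\beta+i}\theta_n[\beta]=\sigma_{\beta+n+i}$ (for $1\leq i\leq n-1$) follows from a direct calculation with the braid relations applied to the defining product $\theta_n[\beta]=\tau_0^{(n)}\cdots\tau_{n-1}^{(n)}$, or equivalently from a strand-by-strand drag-through on the braid diagram.

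Given the lemma, independence of representatives is immediate. Write $p'=apb$ and $q'=cqd$ with $a\in B_\infty[\alpha]$, $b,c\in B_\infty[\beta]$, $d\in B_\infty[\gamma]$, and let $M$ be large enough that $p,q,b,c$ use only generators $\sigma_j$ with $j\leq M$. For $n$ sufficiently large---explicitly $n\geq M-\beta+1$ and $\beta+n\geq\max(\alpha,\gamma)$---the shifted elements $\tilde b:=\theta_n[\beta]^{-1}b\,\theta_n[\beta]$ and $\tilde c:=\theta_n[\beta]\,c\,\theta_n[\beta]^{-1}$ lie respectively in $B_\infty[\gamma]$ and $B_\infty[\alpha]$ (via the inclusion $B_\infty[\beta+n]\subseteq B_\infty[\alpha]\cap B_\infty[\gamma]$) and commute with $p$ and $q$ by the far-commutation relation $\sigma_i\sigma_j=\sigma_j\sigma_i$ for $|i-j|\geq 2$. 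One then computes
\begin{equation*}
p'\theta_n[\beta]q' = a\,p\,b\,\theta_n[\beta]\,c\,q\,d = a\,p\,\theta_n[\beta]\,\tilde b\,c\,q\,d = a\,\tilde c\,p\,\theta_n[\beta]\,q\,\tilde b\,d\in B_\infty[\alpha]\,p\,\theta_n[\beta]\,q\,B_\infty[\gamma].
\end{equation*}

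Independence of $n$ is obtained in the same spirit. A direct manipulation with the braid relations---verified in the small case $\theta_3[0]=\sigma_3\sigma_4\cdot\theta_2[0]\cdot\sigma_5\sigma_4\sigma_3$---establishes $\theta_{n+1}[\beta]=u_n\,\theta_n[\beta]\,v_n$ for certain $u_n,v_n\in B_\infty[\beta+n]$ whose generator supports lie in $\sigma_{\beta+n+1},\ldots,\sigma_{\beta+2n+1}$. For $n$ large these factors commute with $p,q$ and lie in $B_\infty[\alpha]\cap B_\infty[\gamma]$, so $p\,\theta_{n+1}[\beta]\,q=u_n\,p\,\theta_n[\beta]\,q\,v_n\in B_\infty[\alpha]\,p\,\theta_n[\beta]\,q\,B_\infty[\gamma]$, which gives the required stability in $n$.

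Associativity follows formally: with representatives $p,q,r$ of $\mathfrak p,\mathfrak q,\mathfrak r$ and $n,m$ sufficiently large, both $(\mathfrak p\circ\mathfrak q)\circ\mathfrak r$ and $\mathfrak p\circ(\mathfrak q\circ\mathfrak r)$ reduce to the common expression $B_\infty[\alpha]\,p\,\theta_n[\beta]\,q\,\theta_m[\gamma]\,r\,B_\infty[\delta]$ by applying the well-definedness step in turn to the partial products $p\theta_n[\beta]q$ (as a representative of $\mathfrak p\circ\mathfrak q$) and $q\theta_m[\gamma]r$ (as a representative of $\mathfrak q\circ\mathfrak r$). The main obstacle in the whole argument is the key lemma: the permutation picture makes the correct support of the conjugate evident up to pure braids, but upgrading this to a word-level identity requires careful use of the braid relations, exploiting the descending-chain structure of the defining product $\theta_n[\beta]=\tau_0^{(n)}\cdots\tau_{n-1}^{(n)}$.
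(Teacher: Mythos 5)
Your key lemma (the conjugation identity $\theta_n[\beta]^{-1}\sigma_{\beta+i}\theta_n[\beta]=\sigma_{\beta+n+i}$ and its consequence for words in $\sigma_{\beta+1},\dots,\sigma_{\beta+n-1}$) is exactly the paper's Lemma~\ref{lem:comb}, and your treatment of stability in $n$ (via $\theta_{n+1}[\beta]=u_n\theta_n[\beta]v_n$ with $u_n,v_n$ supported in $\sigma_{\beta+n+1},\dots,\sigma_{\beta+2n+1}$) and of independence of representatives coincides with the paper's Propositions~\ref{prop:suflarg} and~\ref{prop:rep}. Those parts are correct.

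The associativity step, however, is a genuine gap, and it is precisely the part of the theorem where all the work lies. Unwinding the definitions, $(\mathfrak p\circ\mathfrak q)\circ\mathfrak r$ is the class of $p\,\theta_n[\beta]\,q\,\theta_{m}[\gamma]\,r$ where $m$ must dominate the support of the representative $p\,\theta_n[\beta]\,q$, i.e.\ $m\gtrsim 2n$; while $\mathfrak p\circ(\mathfrak q\circ\mathfrak r)$ is the class of $p\,\theta_{n'}[\beta]\,q\,\theta_{m'}[\gamma]\,r$ with $n'\gtrsim 2m'$. These are \emph{not} a ``common expression'': the two association orders land in disjoint regions of the $(n,m)$-plane. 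The stability arguments you have only let you move within each region --- for instance, increasing $n$ at fixed $m$ requires the correction factor $\ell\in B_\infty[n+\beta]$ to be pushed past $\theta_m[\gamma]$, which works (via the key lemma) only while $m$ remains much larger than $n$ --- so they never connect $m\gg n$ to $n\gg m$; in the critical range $n\approx m$ the two $\theta$'s genuinely interact and neither commutation device applies. This is exactly why the paper's proof of Proposition~\ref{th:asso} is its longest argument: it factors $\theta_k[\beta]$, $\theta_l[\gamma]$, $\theta_{l'}[\beta]$, $\theta_k[\gamma]$ into ``rectangular'' pieces $P$, $R_i$, $W$, etc.\ (using the rearrangement Lemma~\ref{lem:sym}), and shows by explicit braid manipulations that both sides reduce modulo $B_\infty[\alpha]$ and $B_\infty[\delta]$ to the common representative $aPbWc$. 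Your proposal contains no substitute for this computation, so associativity remains unproved as written.
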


As a consequence we have that $(B_\infty[\alpha]\backslash B_\infty/B_\infty[\alpha],\circ)$ is a monoid, for each non-negative integer $\alpha$.

\begin{Remark}We will show that there exists some $n_0(\alpha,\gamma,p,q)$ such that, for all $n\geq n_0$, $B_\infty[\alpha] p\theta_n[\beta]q B_\infty[\gamma] = B_\infty[\alpha] p\theta_{n_0}[\beta]q B_\infty[\gamma]$ . More precisely, $n_0 = \max\{\spp p,\spp q, \alpha,\gamma\}+1$, where $\spp$ is the support of a braid, defined in Definition~\ref{defi:sppt}.\end{Remark}

For a group $G$ and a subgroup $H\subset G$, we say that $g,g'\in G$ are conjugate with respect to~$H$ if there exists $h\in H$ such that $g' = hgh^{-1}$. Denote by $G//H$ the set of conjugacy classes with respect to the subgroup $H$. There is a natural one-to-one correspondence between the sets $H\backslash (G^n\times H)/H$ and $G^n//H$ (here $H$ is the image of the subgroup $H\subset G$ by the appropriate diagonal map). In fact, it is easy to see that the function from $G^n\times H$ to $G^n$ given by \begin{gather*}
 (g_1,g_2,\ldots,g_n, h)\mapsto \big(g_1h^{-1},g_2h^{-1}, \ldots, g_nh^{-1}\big), \end{gather*}
induces a bijection between the sets $H\backslash( G^n\times H)/H$ and $G^n//H$.

Using the correspondence above, we can define a monoid structure on the set $B_\infty//B_\infty[\alpha]$. In fact, we have an one-to-one correspondence between the sets $B_\infty //B_\infty[\alpha]$ and $B_\infty[\alpha]\backslash(B_\infty \times B_\infty[\alpha])/B_\infty[\alpha]$, the later being a submonoid of $B_\infty[\alpha]\backslash(B_\infty \times B_\infty)/B_\infty[\alpha]$.

Furthermore, as a consequence of the existence of a solution for the conjugacy problem for the braid groups and the fact that the injections $i_n$ do not merge conjugacy classes (see~\cite{meneses}), we have
\begin{Proposition}\label{prop:mene} The conjugacy problem for $B_\infty$ has a solution.
\end{Proposition}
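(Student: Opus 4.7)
The plan is to reduce the conjugacy problem in $B_\infty$ to the conjugacy problem in some finite braid group $B_N$, which is already known to be algorithmically solvable (for instance via Garside normal forms). Given two words $w, w'$ in the generators $\sigma_i$ representing elements of $B_\infty$, one first reads off an integer $N$ larger than every index appearing in either word, so that both represent elements of the finite subgroup $B_N \subset B_\infty$.

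The key step is to establish the equivalence: $g$ and $g'$ in $B_N$ are conjugate in $B_\infty$ if and only if they are conjugate in $B_N$. The ``if'' direction is immediate from $B_N \hookrightarrow B_\infty$. For the converse, suppose $hgh^{-1} = g'$ for some $h \in B_\infty$. Since elements of the direct limit have finite support, $h$ lies in some $B_M$ with $M \geq N$, so $g$ and $g'$ are conjugate in $B_M$. Here I would invoke the theorem of Meneses \cite{meneses} cited in the excerpt, which states that the inclusion $i_n \colon B_n \hookrightarrow B_{n+1}$ does not merge conjugacy classes; in other words, two elements of $B_n$ that are conjugate in $B_{n+1}$ must already be conjugate in $B_n$. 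Iterating this descent $M - N$ times yields a conjugator inside $B_N$.

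Combining the two steps produces the algorithm: from the input words, compute $N$, then call any known solution to the conjugacy problem in $B_N$. The entire substance of the argument is imported: the solvability of the conjugacy problem in each finite $B_n$ on one side, and the non-merging property of the inclusions $i_n$ on the other. There is no real obstacle to overcome once both facts are accepted; the only thing to check is that the descent along $i_{M-1}, i_{M-2}, \dots, i_N$ can be carried out compatibly with the same pair $(g,g')$, which is automatic because each $i_n$ is injective and the non-merging property applies uniformly at every level.
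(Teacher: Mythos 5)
Your argument is correct and follows exactly the same route as the paper: reduce to a finite $B_N$ via finite support, then use the solvability of the conjugacy problem in $B_N$ together with the Gonz\'alez-Meneses non-merging result for the inclusions $i_n$. The only difference is that you spell out the descent step explicitly, which the paper leaves implicit.
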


Notice that combining the observations above with Proposition~\ref{prop:mene}, it is possible to devise an algorithm to determine when two elements of $B_\infty\times B_\infty$ belong to the same class in $B_\infty[0]\backslash B_\infty\times B_\infty/B_\infty[0]$.

Now, let $v=\big(1,t,t^2,\ldots\big)$ and $u = (1,1,1,1,\ldots)$ and denote by $x^{\rm T}$ the transpose of the vector~$x$. Consider the subgroup of ${\rm GL}(\infty)$ given by \begin{gather*}
G[n] = \left\{\begin{pmatrix} 1_n&\\ &X \end{pmatrix};\,X\in {\rm GL}(\infty),\, v^{\rm T}X = v^{\rm T} ,\, Xu=u\right\}.
\end{gather*}It is easy to see that the image of $B_\infty[n]$ by the Burau representation is contained in~$G[n]$.

\begin{Definition}Consider the matrix \begin{gather*}
\Theta_j[k]=\begin{pmatrix}
\mathbf{1}_k&0&0&0\\
0&{V}_j&t^j{1}_j&0\\
0&{1}_j&0&0\\
0&0&0&\mathbf{1}_\infty
\end{pmatrix},
\end{gather*} where \begin{gather*} {V}_j = (1-t)\begin{pmatrix}
1&t&\cdots&t^{j-1}\\
\vdots&\vdots&\ddots&\vdots\\
1&t&\cdots&t^{j-1}
\end{pmatrix}.\end{gather*}
Let $p\in \mathfrak{p}$ and $q\in \mathfrak{q}$ be representatives of the double cosets $\mathfrak{p}\in G[n]\backslash {\rm GL}(\infty)\slash G[k]$ and $\mathfrak{q}\in G[k]\backslash {\rm GL}(\infty)\slash G[m]$. Then we define their product as \begin{gather*}
\mathfrak{p}\star_t\mathfrak{q} = {G}[n] p\Theta_j[k]q {G}[m],
\end{gather*}for sufficiently large $j$.
\end{Definition}

\begin{Theorem}\label{theorem:opma}The operation defined above does not depend on the choice of representatives of double cosets for $j$ large enough. Moreover, it is associative.
\end{Theorem}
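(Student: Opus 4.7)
Plan: The proof of Theorem~\ref{theorem:opma} proceeds in close parallel to Theorem~\ref{theorem:opbr}, with the braid $\theta_n[\beta]$ replaced by the matrix $\Theta_j[k]$ and the subgroups $B_\infty[\beta]$ replaced by $G[k]$. The essential new ingredient is verifying that the $V_j$ block inside $\Theta_j[k]$ interacts correctly with the $u$- and $v$-preservation conditions defining $G[k]$.

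\textbf{Step 1 (conjugation lemma).} I would first show that if $b \in G[k]$ has support contained in positions $\{k+1,\ldots,k+j\}$, so that
\[
b = \operatorname{diag}(1_k,\, B,\, 1_j,\, 1_\infty)
\]
for some $j\times j$ matrix $B$ satisfying $Bu_j = u_j$ and $v_j^{\rm T} B = v_j^{\rm T}$ (with $u_j = (1,\ldots,1)^{\rm T}$ and $v_j = (1,t,\ldots,t^{j-1})^{\rm T}$), then
\[
b\,\Theta_j[k] = \Theta_j[k]\,\tilde b \qquad \text{and} \qquad \Theta_j[k]\,b = \tilde b\,\Theta_j[k],
\]
where $\tilde b = \operatorname{diag}(1_k,\, 1_j,\, B,\, 1_\infty) \in G[k+j]$ has support in $\{k+j+1,\ldots,k+2j\}$. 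This is a block-matrix computation using the rank-one factorization $V_j = (1-t)\,u_j v_j^{\rm T}$, which yields $B V_j = (1-t)(Bu_j) v_j^{\rm T} = V_j$ and $V_j B = (1-t) u_j (v_j^{\rm T} B) = V_j$.

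\textbf{Step 2 (independence of representatives).} Given $p' = apb$ with $a \in G[n]$, $b \in G[k]$, and $q' = b' q c$ with $b' \in G[k]$, $c \in G[m]$, the factors $a$ and $c$ are absorbed at once. For $j$ larger than $\max\{\spp p, \spp q, \spp b, \spp b', n, m\} - k$, Step~1 applied to $b$ yields $b\,\Theta_j[k] = \Theta_j[k]\,\tilde b$ with $\tilde b$ supported in $\{k+j+1,\ldots,k+2j\}$; since $\tilde b$ then commutes with both $b'$ and $q$ (disjoint supports) and lies in $G[k+j] \subseteq G[m]$, it is absorbed on the right. Similarly $\Theta_j[k]\,b' = \tilde{b'}\,\Theta_j[k]$ with $\tilde{b'} \in G[k+j] \subseteq G[n]$ commuting with $p$, and is absorbed on the left. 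Chaining the two reductions gives $G[n]\,p'\,\Theta_j[k]\,q'\,G[m] = G[n]\,p\,\Theta_j[k]\,q\,G[m]$.

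\textbf{Step 3 (stabilization in $j$ and associativity).} For independence of $j$, I would establish a factorization $\Theta_{j+1}[k] = g\,\Theta_j[k]\,h$ with $g, h \in G[k]$, and then invoke Step~2: since $pg$ is another representative of $\mathfrak p$ and $hq$ another representative of $\mathfrak q$, one obtains $G[n]\,p\,\Theta_{j+1}[k]\,q\,G[m] = G[n]\,pg\,\Theta_j[k]\,hq\,G[m] = G[n]\,p\,\Theta_j[k]\,q\,G[m]$. A natural source for such a factorization is to lift through the Burau representation the analogous braid identity relating $\theta_{j+1}[k]$ and $\theta_j[k]$ used in the proof of Theorem~\ref{theorem:opbr}, noting that $\eta$ carries $B_\infty[k]$ into $G[k]$. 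Associativity then becomes a bookkeeping exercise: for a common large $j$, both $(\mathfrak p \star_t \mathfrak q) \star_t \mathfrak r$ and $\mathfrak p \star_t (\mathfrak q \star_t \mathfrak r)$ reduce to $G[n]\,p\,\Theta_j[k]\,q\,\Theta_j[m]\,r\,G[l]$ after one more application of Step~2 to absorb the intermediate representative produced by the inner product.

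\textbf{Main obstacle.} The crux lies in Step~1, because unlike the braid $\theta_n[\beta]$, the matrix $\Theta_j[k]$ carries a genuine $V_j$ block and a $t^j$ scaling, and it is not a priori obvious that conjugation by $\Theta_j[k]$ preserves $G[k]$-membership of supported-block elements. The calculation closes only because the rank-one structure $V_j = (1-t) u_j v_j^{\rm T}$ is perfectly matched to the two defining invariance conditions of $G[k]$, so that $B$ absorbs $V_j$ from each side via $Bu_j = u_j$ and $v_j^{\rm T} B = v_j^{\rm T}$. A secondary technical point is producing the factorization in Step~3, where the cleanest route is to transfer the analogous braid identity through Burau functoriality rather than to compute the matrix identity directly.
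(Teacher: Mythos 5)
Your Steps 1--3 are correct and follow essentially the same route as the paper's proof of well-definedness. Step 1 is the matrix analogue of Lemma~\ref{lem:comb}; the paper performs the same computation inline, showing $J\Theta_M[k]H = H'\Theta_M[k]J'$ with the $h$- and $j$-blocks pushed to the other side of $\Theta_M[k]$, and it rests on exactly the rank-one identities $jV_M=V_M$ and $V_Mh=V_M$ that you isolate via $V_j=(1-t)u_jv_j^{\rm T}$. Your route for stabilization in $j$ --- lifting the braid factorization $\theta_j[k]=u\,\theta_{j+1}[k]\,\ell$ through $\eta$ using $\Theta_j[k]=\eta(\theta_j[k])$ (the paper's Lemma~\ref{lem:thetas}) --- is also the paper's. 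One small repair in Step 3: you cannot literally ``invoke Step 2'' there, since the threshold in Step 2 depends on the supports of the $G[k]$-elements being absorbed, and $g=\eta(u^{-1})$, $h=\eta(\ell^{-1})$ are supported near position $2j$, beyond the scale $j$ you are working at. The fix is immediate and is what the paper does: $g$ and $h$ are the identity on the first $k+j$ coordinates, so they commute with $p$ and $q$ outright and lie in $G[k+j]\subseteq G[n]\cap G[m]$, hence are absorbed directly without passing through $\Theta_j[k]$.

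The genuine gap is associativity. You cannot reduce both $(\mathfrak p\star_t\mathfrak q)\star_t\mathfrak r$ and $\mathfrak p\star_t(\mathfrak q\star_t\mathfrak r)$ to $G[n]\,p\,\Theta_j[k]\,q\,\Theta_j[m]\,r\,G[l]$ ``for a common large $j$'': the outer product must be evaluated at a scale exceeding the support of the inner product's representative, and that support grows with the inner scale. The two associations therefore naturally yield $G[n]\,p\,\Theta_{j_1}[k]\,q\,\Theta_{j_2}[m]\,r\,G[l]$ with $j_2\gg j_1$ versus with $j_1\gg j_2$, and Steps 1--3 only give stabilization in one index once the other is fixed; they do not let you cross the regime where the two $\Theta$-blocks overlap, because there the conjugating elements coming from the factorization of $\Theta_{j+1}$ no longer commute with the other factor and cannot be absorbed. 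Bridging these two regimes is precisely the content of the paper's Proposition~\ref{th:asso} --- a substantial explicit manipulation with the auxiliary braids $P$, $W$, $R_i$, $E$, $A$ --- and the matrix statement inherits it by pushing that entire chain of identities through $\eta$ via Lemma~\ref{lem:thetas}. As written, your ``bookkeeping exercise'' conceals the hardest part of the theorem.
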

\begin{Remark}In particular, there exists an integer $j_0(n,m,p,q)$ such that $G[n] p\Theta_j[k]q G[m] = G[n] p\Theta_{j_0}[k]q G[m]$ for all $j\geq j_0$. We can make $j_0$ more precise. In fact let $N\in\mathbb{N}$ be such that $p$ and $q$ can be written as diagonal block matrices $\left(\begin{smallmatrix}
A&0\\ 0&1_\infty
\end{smallmatrix}\right)$, where $A$ is a square matrix of dimension $k+N$. Then $j_0 = \max\{m,n,k+N\}$.\end{Remark}

\begin{Remark}The operation $\star_t$ generalizes the usual multiplication defined on the double cosets of ${\rm GL}(\infty)$ in the sense that setting the parameter $t=1$ we recover the usual multiplication.
\end{Remark}

Let $G$ be a group and $K[\ast] = \{K[s]; s\in \mathbb{N}\}$ a family of subgroups of $G$. We say that there is a well-defined operation on the double cosets of $G$ with relation to the family $K[\ast]$ when there exists a family of morphisms \begin{gather*}\mu = \{\mu_{rst}\colon K[r]\backslash G/K[s]\times K[s]\backslash G/K[t]\to K[r]\backslash G/K[t]; r,s,t\in\mathbb{N}\}\end{gather*} satisfying \begin{gather*}\mu_{rtu}(\mu_{rst}\times 1_{K[t]\backslash G/K[u]}) = \mu_{rsu}(1_{K[r]\backslash G/K[s]}\times \mu_{stu})\end{gather*} for all $r,s,t,u \in\mathbb{N}$ and, if $\mathbf{e}\in K[r]\backslash G/K[r]$ denotes the class of the unit element of~$G$, then for all $\alpha\in K[t]\backslash G/K[r]$ and all $\beta \in K[r]\backslash G/K[t]$ \begin{gather*}\mu_{trr}(\alpha, \mathbf{e}) = \alpha\qquad\text{and}\qquad\mu_{rrt}(\mathbf{e},\beta) = \beta.\end{gather*}
In this case, consider the category $\mathcal{K}(G,K)$ of double cosets, where the objects are nonnegative integers and the morphisms are given by $\mathrm{Hom}(r,s) = K[s]\backslash G\slash K[r]$. Then,
\begin{Proposition}The Burau representation $\eta\colon B_\infty\to {\rm GL}(\infty)$ induces a functor between the categories $\mathcal{K}(B_\infty,B_\infty[*])$ and $\mathcal{K}({\rm GL}(\infty),G[*])$.
\end{Proposition}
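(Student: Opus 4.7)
My plan is to unpack the definitions and reduce the statement to a single matrix identity, which I then verify by direct computation. First, since the paper already observes that $\eta(B_\infty[\alpha]) \subseteq G[\alpha]$, for any $b \in B_\infty$ we have $\eta(B_\infty[\alpha]\,b\,B_\infty[\beta]) \subseteq G[\alpha]\,\eta(b)\,G[\beta]$, so the Burau representation descends to a well-defined map $\eta_* \colon B_\infty[\alpha]\backslash B_\infty/B_\infty[\beta] \to G[\alpha]\backslash {\rm GL}(\infty)/G[\beta]$ on each morphism set. Because $\eta(1) = 1_\infty$, the identity morphisms (the classes of the unit in the double quotients $B_\infty[r]\backslash B_\infty/B_\infty[r]$) are sent to the identity morphisms in the target category.

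The substantive content is preservation of composition. Fix representatives $p \in \mathfrak{p}$ and $q \in \mathfrak{q}$ of classes $\mathfrak{p} \in B_\infty[\alpha]\backslash B_\infty/B_\infty[\beta]$ and $\mathfrak{q} \in B_\infty[\beta]\backslash B_\infty/B_\infty[\gamma]$, and pick $n$ large enough that both products stabilize (simultaneous stabilization is possible thanks to the explicit bounds in Theorems~\ref{theorem:opbr} and~\ref{theorem:opma}). Applying $\eta$ to the definition of $\mathfrak{p}\circ\mathfrak{q}$ and invoking the definition of $\star_t$, one obtains
\begin{gather*}
\eta_*(\mathfrak{p}\circ\mathfrak{q}) = G[\alpha]\,\eta(p)\,\eta(\theta_n[\beta])\,\eta(q)\,G[\gamma],\\
\eta_*(\mathfrak{p})\star_t\eta_*(\mathfrak{q}) = G[\alpha]\,\eta(p)\,\Theta_n[\beta]\,\eta(q)\,G[\gamma].
\end{gather*}
Functoriality therefore reduces to the single matrix identity $\eta(\theta_n[\beta]) = \Theta_n[\beta]$ in ${\rm GL}(\infty)$.

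To establish this identity, I would proceed by a direct matrix calculation. The staircase $\tau_i^{(n)} = \sigma_{n+\beta+i}\sigma_{n+\beta+i-1}\cdots\sigma_{\beta+i+1}$ has Burau image nontrivial only on rows and columns indexed by $\beta+i+1,\ldots,n+\beta+i+1$, and within that block admits a clean closed form (a single row of the pattern $(1-t,\,t(1-t),\,\ldots,\,t^{n-1}(1-t),\,t^n)$ together with sub-diagonal identity entries in the remaining rows). Multiplying $\eta(\tau_0^{(n)})\cdots\eta(\tau_{n-1}^{(n)})$ then telescopes into the four-block structure of $\Theta_n[\beta]$: the upper-left $n\times n$ quadrant of the active $2n\times 2n$ region gets filled by $V_n$, the upper-right by $t^n\,1_n$, the lower-left by $1_n$, the lower-right by $0$, while the flanking identity blocks $1_\beta$ and $1_\infty$ remain untouched because each $\tau_i^{(n)}$ has support contained in $\{\beta+1,\ldots,2n+\beta\}$. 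The main obstacle is the bookkeeping in this telescoping multiplication, which is elementary but requires careful index arithmetic; an induction on $n$ is an alternative route, but the closed form for $\eta(\tau_i^{(n)})$ makes the direct approach more transparent.
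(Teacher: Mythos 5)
Your overall reduction is exactly the paper's: the paper proves this proposition by citing Lemma~\ref{lem:thetas}, the identity $\eta(\theta_j[k]) = \Theta_j[k]$, together with the containment $\eta(B_\infty[\alpha])\subseteq G[\alpha]$, and you identify the same two ingredients while correctly disposing of well-definedness on double cosets, of identity morphisms, and of the simultaneous choice of a stabilizing $n$. The paper states Lemma~\ref{lem:thetas} without proof, so your attempt to actually verify it is welcome --- but the closed form you propose for $\eta\big(\tau_i^{(n)}\big)$ is wrong. The word $\tau_i^{(n)} = \sigma_{n+\beta+i}\sigma_{n+\beta+i-1}\cdots\sigma_{\beta+i+1}$ is a \emph{descending} staircase, and its Burau matrix has the $(1-t)$'s running down a single \emph{column} with $t$'s on the superdiagonal; the single row $(1-t,\,t(1-t),\ldots,t^{n-1}(1-t),\,t^n)$ with subdiagonal $1$'s that you describe is the Burau matrix of the \emph{reversed} word $\sigma_{\beta+i+1}\cdots\sigma_{n+\beta+i}$. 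Concretely, for $\beta=0$ and $n=2$,
\begin{gather*}
\eta(\sigma_2\sigma_1) = \begin{pmatrix} 1-t & t & 0\\ 1-t & 0 & t\\ 1 & 0 & 0\end{pmatrix},
\qquad
\eta(\sigma_1\sigma_2) = \begin{pmatrix} 1-t & t(1-t) & t^2\\ 1 & 0 & 0\\ 0 & 1 & 0\end{pmatrix},
\end{gather*}
and it is the first of these, not the second, that equals $\eta\big(\tau_0^{(2)}\big)$; multiplying your stated factors would therefore not produce $\Theta_n[\beta]$. The identity $\eta(\theta_n[\beta])=\Theta_n[\beta]$ itself is true (for instance $\eta(\sigma_2\sigma_1\sigma_3\sigma_2)=\Theta_2[0]$ by direct computation), so the error is localized and fixable: redo the telescoping product with the correct column-form factors, or induct on $n$ using the relation $\theta_m[\beta]=u\,\theta_{m+1}[\beta]\,\ell$ from Step~3 of Proposition~\ref{prop:suflarg}. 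Everything else in your argument stands.
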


When $G$ is the bisymmetric group (the group that consists of pairs $(g,h)$ of permutations of~$\mathbb{N}$ such that $gh^{-1}$ is a finite permutation) and $K$ is its diagonal subgroup, we get a category called the train category of the pair~$(G,K)$. This category admits a transparent combinatorial description and encodes information about the representations of the bisymmetric group (see~\mbox{\cite{neretin4, olsh4}}).

\section{Proofs of main results}
\subsection{Proof of Theorem \ref{theorem:opbr}}
Before proceeding, we introduce the notion of \textit{support}, which will be needed later.

\begin{Definition}\label{defi:sppt} Let $p$ be a braid in $B_\infty$. The \textit{support} of $p$ is \begin{gather*}
\spp p = \min\{j\in\mathbb{N};\, p\in\langle \sigma_1,\ldots,\sigma_j\rangle\}.
\end{gather*}\end{Definition}
Notice that the decomposition of $p$ into the product of elementary braids does not contain any element of $B_\infty[\spp p]$, hence $p$ commutes with every element of $B_\infty[1+\spp p]$. Also, we can identify $p$ with an element of $B_{1+\spp p}$. We define $\spp 1 = 0$.

Consider double cosets\begin{gather*}
\mathfrak{p}\in B_\infty[\alpha]\backslash B_\infty\slash B_\infty[\beta] \qquad\text{and}\qquad \mathfrak{q}\in B_\infty[\beta]\backslash B_\infty\slash B_\infty[\gamma],
\end{gather*} and let $p\in\mathfrak{p}$ and $q\in\mathfrak{q}$ be their respective representatives. Setting $\mathfrak{r}_j = B_\infty[\alpha] p\theta_j[\beta]q B_\infty[\gamma],$ we have a sequence of double cosets in $B_\infty[\alpha]\backslash B_\infty\slash B_\infty[\gamma]$.

\begin{Proposition}\label{prop:suflarg} The sequence $(\mathfrak{r}_j)_{j\geq 1}$ defined above is eventually constant.\end{Proposition}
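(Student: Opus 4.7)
Let $s = \max\{\spp p, \spp q, \alpha, \gamma\}+1$. I will prove $\mathfrak{r}_{j+1}=\mathfrak{r}_j$ for every $j\geq s$, so the sequence stabilizes no later than index $s$ (which is precisely the explicit bound announced in the remark following Theorem~\ref{theorem:opbr}). The crux is a braid identity of the form
$$\theta_{j+1}[\beta] \;=\; u_j\,\theta_j[\beta]\,v_j, \qquad \text{with} \quad u_j,v_j\in B_\infty[\beta+j],$$
where one may take $u_j = \sigma_{\beta+j+1}\sigma_{\beta+j+2}\cdots\sigma_{\beta+2j}$ and $v_j = \sigma_{\beta+2j+1}\sigma_{\beta+2j}\cdots\sigma_{\beta+j+1}$; a quick hand-check in the case $\beta=0,\,j=2$ confirms $\sigma_3\sigma_4\cdot\theta_2[0]\cdot\sigma_5\sigma_4\sigma_3 = \theta_3[0]$.

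To verify the identity in general I would start from $\theta_{j+1}[\beta]=\tau_0^{(j+1)}\cdots\tau_j^{(j+1)}$, note the factorization $\tau_i^{(j+1)} = \sigma_{\beta+j+1+i}\,\tau_i^{(j)}$ for $i<j$, and migrate the surplus generators $\sigma_{\beta+j+1+i}$ outwards. The indices $\beta+j+1+i$ differ by at least $2$ from most of the indices appearing in the $\tau_k^{(j)}$'s that they must cross, so the bulk of the shuffling is free commutation; at a bounded number of collision sites one applies the single braid relation $\sigma_a\sigma_{a+1}\sigma_a=\sigma_{a+1}\sigma_a\sigma_{a+1}$. Equivalently, one may confirm the identity pictorially by recognizing that both sides represent the same positive braid diagram swapping two blocks of $j+1$ consecutive strands above level $\beta$, with the ``extra'' strand of each block peeled to the opposite extreme.

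Granted this identity, the rest is bookkeeping on subgroup containments. Since $j\geq s$, we have $\beta+j\geq j\geq \spp p + 1$ and $\beta+j\geq \spp q + 1$, so $u_j,v_j\in B_\infty[1+\spp p]\cap B_\infty[1+\spp q]$ commute with both $p$ and $q$, giving
$$p\,\theta_{j+1}[\beta]\,q \;=\; p\,u_j\,\theta_j[\beta]\,v_j\,q \;=\; u_j\,\bigl(p\,\theta_j[\beta]\,q\bigr)\,v_j.$$
Moreover $\beta+j>\alpha$ and $\beta+j>\gamma$ yield $u_j\in B_\infty[\alpha]$ and $v_j\in B_\infty[\gamma]$, so $p\,\theta_{j+1}[\beta]\,q\in B_\infty[\alpha]\,p\,\theta_j[\beta]\,q\,B_\infty[\gamma]$ and therefore $\mathfrak{r}_{j+1}=\mathfrak{r}_j$. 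Iterating, $\mathfrak{r}_j=\mathfrak{r}_s$ for all $j\geq s$, as required.

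I expect the only non-routine step to be the braid identity itself: although each individual generator to be moved commutes freely past most of its neighbours, the rewriting has to be organised consistently across the $j$ factors $\tau_i^{(j+1)}$ and the braid-relation applications at the collisions must be composed without drift. After that identity is in hand, the remainder of the argument is a short chain of inclusions using only $j\geq s$ and the definitions of $\spp$ and $B_\infty[\cdot]$.
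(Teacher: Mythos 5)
Your proof is correct and follows essentially the same route as the paper: your identity $\theta_{j+1}[\beta]=u_j\,\theta_j[\beta]\,v_j$ is exactly the paper's Step 3 (there written as $\theta_j[\beta]=u_j^{-1}\theta_{j+1}[\beta]v_j^{-1}$, with $v_j=\tau_j^{(j+1)}$), and your support bookkeeping is its Step 4. One minor remark: the identity needs only the far commutations $\sigma_a\sigma_b=\sigma_b\sigma_a$ for $|a-b|\ge 2$, since each surplus generator $\sigma_{\beta+j+1+i}$ clears every letter of $\tau_0^{(j)},\dots,\tau_{i-1}^{(j)}$ by at least two in index, so there are in fact no collision sites and no applications of $\sigma_a\sigma_{a+1}\sigma_a=\sigma_{a+1}\sigma_a\sigma_{a+1}$ are required.
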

\begin{proof}We are going to give a proof in several steps:

\textit{Step 1.} Given $m>0$ we have $\tau_i^{(m+1)} = \sigma_{m+\beta +1+i}\tau_i^{(m)}$ for all $0\leq i\leq m-1$. In fact, we have the equality \begin{gather*}
\tau_i^{(m+1)} = \sigma_{m+1+\beta+i}\sigma_{m+\beta+i}\sigma_{m+\beta+i-1}\cdots\sigma_{\beta+i+1} = \sigma_{m+1+\beta+i}\tau_i^{(m)}.
\end{gather*}

\textit{Step 2.} For all $j\leq i$ we have $\sigma_{m+\beta+i+2}\tau^{(m)}_j = \tau_j^{(m)} \sigma_{m+\beta+i+2}$. Indeed,
since $\spp \tau_j^{(m)} = m+\beta+j$ and $\sigma_{m+\beta+i+2}\in B_\infty[1+m+\beta+j]$, we find that $\sigma_{m+\beta+i+2}$ commutes with $\tau_j^{(m)}$.

\textit{Step 3.} Define $u=(\sigma_{m+\beta +1}\sigma_{m+\beta+2}\cdots\sigma_{2m+\beta})^{-1}$ and $\ell^{-1}\! = \tau_m^{(m+1)}$. Then $\theta_m[\beta] = u\theta_{m+1}[\beta]\ell$. In fact, we have
\begin{gather*}
u\theta_{m+1}[\beta]\ell = u\big(\tau_0^{(m+1)}\cdots\tau_m^{(m+1)}\big)\ell = u\tau_0^{(m+1)}\cdots\tau_{m-1}^{(m+1)} \\
\hphantom{u\theta_{m+1}[\beta]\ell}{} = u\big(\sigma_{m+\beta +1}\tau_0^{(m)}\big)\big(\sigma_{m+\beta +2}\tau_1^{(m)}\big)\cdots\big(\sigma_{2m+\beta}\tau_{m-1}^{(m)}\big) \\
\hphantom{u\theta_{m+1}[\beta]\ell}{} = \sigma_{2m+\beta}^{-1}\sigma_{2m+\beta-1}^{-1}\cdots\sigma_{m+\beta+2}^{-1}\tau_0^{(m)}\big(\sigma_{m+\beta +2}\tau_1^{(m)}\big)\cdots\big(\sigma_{2m+\beta}\tau_{m-1}^{(m)}\big) \\
\hphantom{u\theta_{m+1}[\beta]\ell}{} = \sigma_{2m+\beta}^{-1}\sigma_{2m+\beta-1}^{-1}\cdots\sigma_{m+\beta+2}^{-1}\sigma_{m+\beta +2}\tau_0^{(m)}\tau_1^{(m)}\cdots\big(\sigma_{2m+\beta}\tau_{m-1}^{(m)}\big) \\
\hphantom{u\theta_{m+1}[\beta]\ell}{} = \sigma_{2m+\beta}^{-1}\sigma_{2m+\beta-1}^{-1}\cdots\sigma_{m+\beta+3}^{-1}\tau_0^{(m)}\tau_1^{(m)}\big(\sigma_{m+\beta +3}\tau_2^{(m)}\big)\cdots\big(\sigma_{2m+\beta}\tau_{m-1}^{(m)}\big) \\
\hphantom{u\theta_{m+1}[\beta]\ell=}{} \cdots\cdots\cdots\cdots\cdots\cdots\cdots\cdots\cdots\cdots\cdots\cdots\cdots\cdots\cdots\cdots\cdots\cdots\cdots\\
\hphantom{u\theta_{m+1}[\beta]\ell}{} = \tau_0^{(m)}\tau_1^{(m)}\tau_2^{(m)}\cdots\tau_{m-1}^{(m)}= \theta_m[\beta].
\end{gather*}

\textit{Step 4.} Let $M = \max\{\spp p,\spp q, \alpha,\gamma\}+1$. We show that for all $m\geq M$, we have $\mathfrak{r}_m = \mathfrak{r}_{m+1}$ and hence that $\mathfrak{r}_m = \mathfrak{r}_M$ for all $m\geq M$. Let $u$ and $\ell$ be like in step 3. Since $u,\ell \in B_\infty[{m+\beta}]$, it follows that $u\in B_\infty[\alpha]$, $\ell\in B_\infty[\gamma]$ and they commute with $p$ and $q$. Therefore \begin{gather*}u(p\theta_{m+1}[\beta]q)\ell = p(u\theta_{m+1}[\beta]\ell)q = p\theta_m[\beta]q.\end{gather*} Thus $\mathfrak{r}_m = \mathfrak{r}_{m+1}$.
\end{proof}

The following technical lemma will be used in the proof of Lemma \ref{lem:comb}, which in turn is used in Proposition \ref{prop:rep} and more extensively in Theorem \ref{th:asso}.

\begin{Lemma}\label{lem:sym} Let $\big\{\big(\upsilon_j^i\big)_{j=1}^\ell\big\}_{i=1}^g$ be a family of sequences of positive integers such that $\upsilon_{j+k}^i<\upsilon_{j}^{i+n}$ whenever $k+n>0$ with $k,n\in \mathbb{N}$; in other words, the sequences $\big(\upsilon_j^i\big)_{j\geq 1}$ are decreasing and the sequences $\big(\upsilon_j^i\big)_{i\geq 1}$ are increasing. If $\mu_j = \prod\limits_{k=1}^\ell \sigma_{\upsilon_k^j}$ and $\lambda_i = \prod\limits_{k=1}^g \sigma_{\upsilon_i^k}$, then $\mu_1\cdots\mu_g=P=\lambda_1\cdots\lambda_\ell$.
\end{Lemma}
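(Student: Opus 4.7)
The plan is to induct on $\ell$, the common length of the sequences (equivalently, the number of ``rows'' in the rectangular array with entries $\upsilon_j^i$). The base case $\ell = 1$ is immediate: each $\mu_j$ reduces to $\sigma_{\upsilon_1^j}$, while $\lambda_1 = \sigma_{\upsilon_1^1}\sigma_{\upsilon_1^2}\cdots\sigma_{\upsilon_1^g}$, so both products coincide with $\sigma_{\upsilon_1^1}\cdots\sigma_{\upsilon_1^g}$.

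For the inductive step I would peel off the first row. Set $\tilde\mu_j = \sigma_{\upsilon_2^j}\sigma_{\upsilon_3^j}\cdots\sigma_{\upsilon_\ell^j}$, so that $\mu_j = \sigma_{\upsilon_1^j}\tilde\mu_j$. The truncated array $\{(\upsilon_j^i)_{j=2}^\ell\}_{i=1}^g$ still satisfies the monotonicity hypothesis, so the induction hypothesis gives $\tilde\mu_1\cdots\tilde\mu_g = \lambda_2\cdots\lambda_\ell$. It thus suffices to prove $\mu_1\cdots\mu_g = \lambda_1\,\tilde\mu_1\cdots\tilde\mu_g$: starting from $(\sigma_{\upsilon_1^1}\tilde\mu_1)(\sigma_{\upsilon_1^2}\tilde\mu_2)\cdots(\sigma_{\upsilon_1^g}\tilde\mu_g)$, the plan is to slide each factor $\sigma_{\upsilon_1^b}$ (for $b \geq 2$) leftward past every $\tilde\mu_{b'}$ with $b' < b$, thereby collecting all of them into $\lambda_1 = \sigma_{\upsilon_1^1}\sigma_{\upsilon_1^2}\cdots\sigma_{\upsilon_1^g}$ at the front.

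The crux, and the step I expect to require the most care, is verifying that each such slide is a legal commutation in $B_\infty$: for $a \geq 2$ and $b' < b$ one must check $|\upsilon_1^b - \upsilon_a^{b'}| \geq 2$. I would obtain this by chaining two applications of the hypothesis through the intermediate vertex $\upsilon_1^{b'}$: with $(k,n) = (a-1,0)$ the hypothesis gives $\upsilon_a^{b'} < \upsilon_1^{b'}$, and with $(k,n) = (0,b-b')$ it gives $\upsilon_1^{b'} < \upsilon_1^b$. Since all $\upsilon$'s are integers, the two strict inequalities force $\upsilon_1^b \geq \upsilon_a^{b'} + 2$, so $\sigma_{\upsilon_1^b}$ commutes with every factor of $\tilde\mu_{b'}$. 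With this gap estimate in hand, the inductive step becomes pure bookkeeping of commutations and the lemma follows. Notably, no braid relation is needed anywhere; only the commutation relations $\sigma_p\sigma_q=\sigma_q\sigma_p$ for $|p-q|\geq 2$ are used.
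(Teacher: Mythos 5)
Your proof is correct and is essentially the paper's argument: both are inductions on the size of the array that reduce everything to the far commutations $\sigma_p\sigma_q=\sigma_q\sigma_p$ for $|p-q|\ge 2$, with the required gap of at least $2$ obtained exactly as you obtain it, by chaining the two strict monotonicity inequalities through an intermediate entry. The only difference is organizational: you run a single induction on $\ell$ and peel off the first column to assemble $\lambda_1$ at the front, whereas the paper inducts on the pair $(g,\ell)$ and peels off the last row $\mu_{g+1}$, sliding its letters leftward into the column products.
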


\begin{proof}We prove the lemma by induction on the pair $(g,\ell)$. The statement is trivial for $g=\ell=1$. Assume it is true for $(g,\ell)$, we prove it is true for $(g+1,\ell)$ and $(g,\ell+1)$.

For $(g+1,\ell)$, notice that \begin{gather*}
\prod_{s=1}^{g+1}\prod_{r=1}^{\ell}\sigma_{\upsilon_r^s}=\left(\prod_{s=1}^{g}\prod_{r=1}^{\ell}\sigma_{\upsilon_r^s}\right)\left(\prod_{r=1}^{\ell}\sigma_{\upsilon_r^{g+1}}\right) =\left(\prod_{r=1}^{\ell}\prod_{s=1}^{g}\sigma_{\upsilon_r^{s}}\right)\left(\prod_{r=1}^{\ell}\sigma_{\upsilon_r^{g+1}}\right).
\end{gather*}If $x_r=\prod\limits_{s=1}^g \sigma_{\upsilon_r^{s}}$ we have that $x_r\sigma_{\upsilon_t^{g+1}}=\sigma_{\upsilon_t^{g+1}}x_r$ for $r>t$, this follows from the inequalities $\upsilon_r^{s}<\upsilon_r^{g+1}<\upsilon_t^{g+1}$ for $s<g+1$. Therefore \begin{gather*}
\left(\prod_{r=1}^{\ell}x_r\right)\left(\prod_{r=1}^{\ell}\sigma_{\upsilon_r^{g+1}}\right)=
x_1\cdots x_\ell\sigma_{\upsilon_1^{g+1}}\cdots\sigma_{\upsilon_\ell^{g+1}}=
x_1\sigma_{\upsilon_1^{g+1}}x_2\sigma_{\upsilon_2^{g+1}}\cdots x_\ell\sigma_{\upsilon_\ell^{g+1}}\\
\hphantom{\left(\prod_{r=1}^{\ell}x_r\right)\left(\prod_{r=1}^{\ell}\sigma_{\upsilon_r^{g+1}}\right)}{} =
\prod_{r=1}^\ell x_r\sigma_{\upsilon_r^{g+1}}= \prod_{r=1}^{\ell}\prod_{s=1}^{g+1}\sigma_{\upsilon_r^s}.
\end{gather*}

The proof for the case $(g,\ell+1)$ is analogous.
\end{proof}

\begin{Example}Consider the sequences given by $\upsilon_j^i = 3+i-j$, $1\leq i,j \leq 3$. Let $\mu_i$, $\lambda_i$, $i=1,2,3$ be as in Lemma~\ref{lem:sym}. By the same lemma we have that $\mu_1\mu_2\mu_3 = \lambda_1\lambda_2\lambda_3$. These products are depicted in Fig.~\ref{fig:seqprod}(a) and~(c). Drawing these braids in a more compact form (Fig.~\ref{fig:seqprod}(b)) the equivalence between the products becomes evident.
 \begin{figure}[h!]\centering
\noindent\mbox{%
\begin{minipage}{4.5cm}
\resizebox{4.5cm}{4.5cm}{
\begin{tikzpicture}
\braid
s_4 s_6 s_7

s_3 s_5 s_6

s_1 s_2 s_3
;

\end{tikzpicture}}%
\begin{center}
(a) $\mu_1\mu_2\mu_3$
\end{center}
\end{minipage}}

\hspace{.7cm}
\noindent\mbox{%
\begin{minipage}{4.5cm}
\resizebox{4.5cm}{4.5cm}{
\begin{tikzpicture}
\braid
s_4-s_6

s_1-s_3-s_5-s_7

s_2-s_6

s_3


;

\end{tikzpicture}}%
\begin{center}
(b) $P$
\end{center}
\end{minipage}}

\hspace{.7cm}%
\noindent\mbox{%
\begin{minipage}{4.5cm}
\resizebox{4.5cm}{4.5cm}{
\begin{tikzpicture}
\braid
s_4 s_3 s_1

s_6 s_5 s_2

s_7 s_6 s_3
;

\end{tikzpicture}}%
\begin{center}
(c) $\lambda_1\lambda_2\lambda_3$
\end{center}
\end{minipage}}

\caption{The equality $\mu_1\mu_2\mu_3 = P = \lambda_1\lambda_2\lambda_3$.}\label{fig:seqprod}
\end{figure}
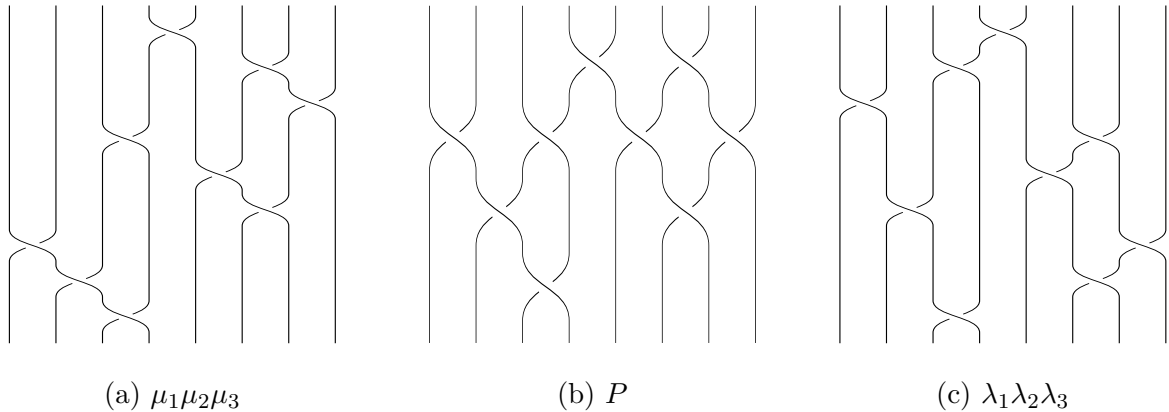
\end{Example}

It will be useful to write the product $P$ from Lemma~\ref{lem:sym} as a matrix, where the indices increase from right to left and from top to bottom.
\begin{gather*}P = \begin{bmatrix}
 \upsilon_1^1&\rightarrow&\upsilon_\ell^1\\
 \downarrow& &\downarrow\\
 \upsilon_1^g&\rightarrow&\upsilon_\ell^g
 \end{bmatrix}.
\end{gather*}

In this way, $\lambda_1\cdots\lambda_\ell$ is the column-wise product and $\mu_1\cdots\mu_g$ is the row-wise product.

Consider, for each positive integer $m$, the homomorphism $C_m\colon B_\infty\to B_\infty$ given by $C_m(\sigma_j)=\sigma_{m+j}$. Then we have the following lemma.

\begin{Lemma}\label{lem:comb} Let $\beta$ and $j$ be nonnegative integers with $j>1$. If $d\in\langle \sigma_{\beta+1},\ldots,\sigma_{\beta+j-1}\rangle$, then:
\begin{enumerate}[$(i)$]\itemsep=0pt
\item $d\theta_j[\beta]=\theta_j[\beta]C_j(d)$,
\item $\theta_j[\beta]d=C_j(d)\theta_j[\beta]$.
\end{enumerate}\end{Lemma}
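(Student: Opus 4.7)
Both equations (i) and (ii) are multiplicative in $d$---for instance, if (i) holds for $d_1$ and $d_2$ then $d_1d_2\theta_j[\beta]=d_1\theta_j[\beta]C_j(d_2)=\theta_j[\beta]C_j(d_1)C_j(d_2)=\theta_j[\beta]C_j(d_1d_2)$---and each is similarly closed under inversion in $d$. So it suffices to prove both claims for the generators $d=\sigma_{\beta+i}$ with $1\le i\le j-1$. The whole argument then hinges on two elementary conjugation identities in $B_\infty$, both quick consequences of the braid relation together with the commutation $\sigma_m\sigma_n=\sigma_n\sigma_m$ for $|m-n|\ge 2$: for $a<b$ and any $a\le k<b$,
\begin{gather*}
(\sigma_a\sigma_{a+1}\cdots\sigma_b)\,\sigma_k\,(\sigma_a\sigma_{a+1}\cdots\sigma_b)^{-1}=\sigma_{k+1},\\
(\sigma_b\sigma_{b-1}\cdots\sigma_a)^{-1}\,\sigma_k\,(\sigma_b\sigma_{b-1}\cdots\sigma_a)=\sigma_{k+1}.
\end{gather*}

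For (i), I would expand $\theta_j[\beta]=\tau_0^{(j)}\tau_1^{(j)}\cdots\tau_{j-1}^{(j)}$ as given. Each factor $\tau_r^{(j)}=\sigma_{\beta+j+r}\sigma_{\beta+j+r-1}\cdots\sigma_{\beta+r+1}$ has the decreasing shape of the second identity above, with $a=\beta+r+1$ and $b=\beta+j+r$. Conjugating $\sigma_{\beta+i}$ through $\tau_0^{(j)},\tau_1^{(j)},\dots$ from the innermost factor outward, the second identity gives $(\tau_r^{(j)})^{-1}\sigma_{\beta+i+r}\tau_r^{(j)}=\sigma_{\beta+i+r+1}$; the required range $\beta+r+1\le\beta+i+r<\beta+j+r$ is exactly the hypothesis $1\le i\le j-1$. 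Telescoping over $r=0,1,\dots,j-1$ yields $\theta_j[\beta]^{-1}\sigma_{\beta+i}\theta_j[\beta]=\sigma_{\beta+i+j}=C_j(\sigma_{\beta+i})$, which rearranges to (i).

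For (ii), I would rewrite $\theta_j[\beta]$ in column-wise form by applying Lemma \ref{lem:sym}. Setting $\upsilon_k^i=\beta+i+j-k$ for $1\le i,k\le j$, the monotonicity hypothesis of that lemma is met, and its row product $\mu_1\cdots\mu_j$ coincides with the given row-wise expression $\tau_0^{(j)}\cdots\tau_{j-1}^{(j)}$ of $\theta_j[\beta]$. Hence the column product equals $\theta_j[\beta]$ as well, producing $\theta_j[\beta]=\lambda_1\lambda_2\cdots\lambda_j$ with $\lambda_r=\sigma_{\beta+j-r+1}\sigma_{\beta+j-r+2}\cdots\sigma_{\beta+2j-r}$, now of the increasing shape of the first identity. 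Conjugating innermost outward gives $\lambda_{j-s}\sigma_{\beta+i+s}\lambda_{j-s}^{-1}=\sigma_{\beta+i+s+1}$ for $s=0,1,\dots,j-1$ (same range verification), and telescoping yields $\theta_j[\beta]\sigma_{\beta+i}\theta_j[\beta]^{-1}=\sigma_{\beta+i+j}$, which is (ii).

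The conceptual content is the symmetric use of Lemma \ref{lem:sym} to switch between the two presentations of $\theta_j[\beta]$: each presentation makes one direction of the conjugation dissolve into a clean telescoping product. Once this dichotomy is set up, both halves of the lemma reduce to the same elementary conjugation identity, and the only real issue---the one I expect to have to be careful about---is the index bookkeeping to ensure the range condition $a\le k<b$ is met at every step, which is immediate from $1\le i\le j-1$.
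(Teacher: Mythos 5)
Your proposal is correct and follows essentially the same route as the paper: reduce to the generators $\sigma_k$, prove (i) by sliding $\sigma_{k}$ through the decreasing factors $\tau_r^{(j)}$ via the braid relation, and prove (ii) by invoking Lemma~\ref{lem:sym} with $\upsilon_r^s=j+\beta+s-r$ to pass to the column-wise (increasing) factorization $\theta_j[\beta]=\lambda_1\cdots\lambda_j$ and telescoping there. Your packaging of the key step as two general conjugation identities is a mild reorganization of the same computation the paper carries out explicitly.
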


\begin{proof}Since $C_j$ is a homomorphism, it is enough to prove both statements of the proposition for the case where $d = \sigma_k$, for some $\beta+1\leq k\leq \beta+j-1$.

(i) Recall that $\theta_j[\beta] = \tau_0^{(j)}\cdots\tau_{j-1}^{(j)}$. We claim that the following holds: \begin{gather*}
\sigma_{k+i}\tau_i^{(j)} = \tau_i^{(j)}\sigma_{k+i+1},\qquad 0\leq i\leq j-1.
\end{gather*}
Indeed, since $\sigma_{k+i}$ is a letter of $\tau_i^{(j)},$ but it is different from $\sigma_{j+\beta+i}$, we have
\begin{gather*}
\sigma_{k+i}\tau_i^{(j)} = \sigma_{k+i}(\sigma_{j+\beta+i}\cdots\sigma_{\beta+1+i})=\sigma_{j+\beta+i}\cdots\sigma_{k+i+2}\sigma_{k+i}\sigma_{k+i+1}\sigma_{k+i}\sigma_{k+i-1}\cdots\sigma_{\beta+1+i}\\
\hphantom{\sigma_{k+i}\tau_i^{(j)}}{} =\sigma_{j+\beta+i}\cdots\sigma_{k+i+2}\sigma_{k+i+1}\sigma_{k+i}\sigma_{k+i+1}\sigma_{k+i-1}\cdots\sigma_{\beta+1+i} \\
\hphantom{\sigma_{k+i}\tau_i^{(j)}}{} =\sigma_{j+\beta+i}\cdots\sigma_{\beta+1+i}\sigma_{k+i+1} = \tau_i^{(j)}\sigma_{k+i+1}.
\end{gather*}
Therefore \begin{gather*}
\sigma_k\theta_j[\beta] = \sigma_k\tau_0^{(j)}\cdots\tau_{j-1}^{(j)} = \tau_0^{(j)}\sigma_{k+1}\tau_1^{(j)}\cdots\tau_{j-1}^{(j)} = \cdots = \tau_0^{(j)}\cdots\sigma_{k+j-1}\tau_{j-1}^{(j)} \\
\hphantom{\sigma_k\theta_j[\beta]}{} = \tau_0^{(j)}\cdots\tau_{j-1}^{(j)}\sigma_{k+j} = \theta_j[\beta]\sigma_{k+j}.
\end{gather*}

(ii) Let $\upsilon_r^s = j + \beta +s -r$ for $r$ and $s$ positive integers. The family $\{(\upsilon_r^s)\}_{r,s=1}^j$ satisfies the hypothesis of Lemma \ref{lem:sym} and therefore $\mu_1\cdots\mu_j = \lambda_1\cdots\lambda_j$, where \begin{gather*}
\mu_i = \sigma_{j+\beta+i-1}\cdots\sigma_{\beta+i} \qquad\text{and}\qquad \lambda_i = \sigma_{j+\beta-i+1}\cdots\sigma_{2j+\beta-i}.
\end{gather*} Since $\mu_i = \tau_{i-1}^{(j)}$, we see that $\theta_j[\beta] = \lambda_1\cdots\lambda_j$. As we saw in item~(i), we have that \begin{gather*}
\lambda_{j-i}\sigma_{k+i} = \sigma_{k+i+1}\lambda_{j-i},\qquad 0\leq i\leq j-1.\tag*{\qed}
\end{gather*}\renewcommand{\qed}{}
\end{proof}

\begin{Remark}The intuition behind Lemma \ref{lem:comb} is that the element $\theta_j[\beta]$ exchanges braids in the interval between strands $\beta+1$ and $\beta+j$ with braids in the interval between strands $\beta+j+1$ and $\beta+2j$.

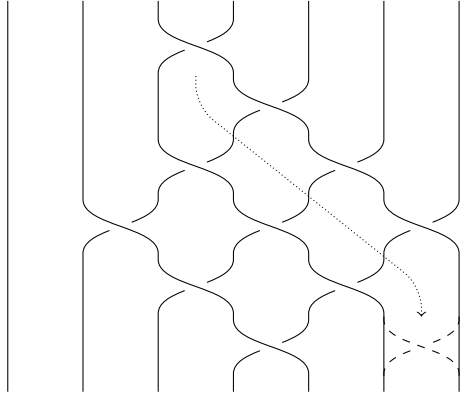
\begin{figure}[h!]\centering
\begin{tikzpicture}[yscale=0.8]
\braid
s_3

s_{4}
s_{3}-s_{5}
s_{2}-s_{4}-s_{6}
s_{3}-s_{5}
s_{4};
\braid[border height=0cm, style strands={1,2}{dashed}] at (6,-5.25)
s_1;

\draw[->, rounded corners=8pt, densely dotted] (3.5,-1.25) -- (3.5,-1.75) -- (6.5,-4.75) -- (6.5,-5.25);
\end{tikzpicture}
\caption{The element $\theta_3[1]$ exchanges the elementary braids $\sigma_3$ and $\sigma_6$.}\label{fig:slidet}
\end{figure}
\end{Remark}

Our next step is to prove that the product does not depend on the chosen representatives.

\begin{Proposition} \label{prop:rep}Let $p'$ and $q'$ be other two representatives of $\mathfrak{p}$ and $\mathfrak{q}$ respectively. Consider the sequence \begin{gather*}
 \mathfrak{r'}_j = B_\infty[\alpha] p'\theta_j[\beta]q' B_\infty[\gamma].
\end{gather*} Then there exists an integer $N>0$ such that \begin{gather*}
\mathfrak{r'}_j = \mathfrak{r}_j, \qquad \text{for all}\quad j\geq N.
\end{gather*}\end{Proposition}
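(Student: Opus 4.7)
The plan is to write $p' = apb$ and $q' = cqd$ with $a\in B_\infty[\alpha]$, $b,c\in B_\infty[\beta]$ and $d\in B_\infty[\gamma]$, which is possible since $p'\in\mathfrak{p}$ and $q'\in\mathfrak{q}$. The idea is to ``slide'' the inner factors $b$ and $c$ through $\theta_j[\beta]$ using Lemma~\ref{lem:comb}; the shifted copies $C_j(b)$ and $C_j(c)$ then sit far to the right, where, for $j$ large, they commute with $p$ and $q$ and belong to both $B_\infty[\alpha]$ and $B_\infty[\gamma]$, so they can be absorbed into the outer factors of the double coset.

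Concretely, I would pick $N$ large enough that for every $j\ge N$ the following hold simultaneously: (a) both $b$ and $c$ lie in $\langle \sigma_{\beta+1},\dots,\sigma_{\beta+j-1}\rangle$, so Lemma~\ref{lem:comb} applies; (b) $\beta+j+1 > \max\{\spp p,\spp q\}$, so $C_j(b)$ and $C_j(c)$ commute past $p$ and $q$; and (c) $\beta+j+1 > \max\{\alpha,\gamma\}$, so $C_j(b),C_j(c)\in B_\infty[\alpha]\cap B_\infty[\gamma]$. Each condition is a lower bound on $j$, so $N$ is just the maximum of them. Note that for the intermediate commutation in step~two below I also need $\spp C_j(b)\subseteq\{\beta+j+1,\dots,\beta+2j-1\}$ and $\spp c\subseteq\{\beta+1,\dots,\beta+j-1\}$ to be separated by a gap of at least two, which is automatic from~(a).

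The calculation I would then run, for $j\ge N$, is a short string of three moves. First, Lemma~\ref{lem:comb}(i) gives $b\,\theta_j[\beta] = \theta_j[\beta]\,C_j(b)$, so
\[
apb\,\theta_j[\beta]\,cqd \;=\; ap\,\theta_j[\beta]\,C_j(b)\,cqd.
\]
Second, by (b) and by the support separation noted above, $C_j(b)$ commutes past both $c$ and $q$, giving $ap\,\theta_j[\beta]\,cq\,C_j(b)\,d$. Third, Lemma~\ref{lem:comb}(ii) gives $\theta_j[\beta]\,c = C_j(c)\,\theta_j[\beta]$ and, again by~(b), $C_j(c)$ commutes past $p$; collecting everything yields
\[
p'\theta_j[\beta]q' \;=\; \big(a\,C_j(c)\big)\;p\,\theta_j[\beta]\,q\;\big(C_j(b)\,d\big),
\]
which, by~(c), lies in $B_\infty[\alpha]\,p\,\theta_j[\beta]\,q\,B_\infty[\gamma]=\mathfrak{r}_j$. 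Hence $\mathfrak{r}'_j=\mathfrak{r}_j$ for all $j\ge N$.

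The main obstacle is not conceptual but bookkeeping: one has to simultaneously arrange that $b,c$ lie in the precise ``small'' window where Lemma~\ref{lem:comb} is applicable and that their shifted images $C_j(b),C_j(c)$ land in the ``large'' window where every needed commutation and containment holds. Once the threshold $N$ is chosen to satisfy all three families of inequalities, Lemma~\ref{lem:comb} does the real work and the rest is rearrangement.
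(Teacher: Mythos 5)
Your argument is correct and is essentially the paper's own proof: the paper likewise writes $p'=rph$ and $q'=kqs$, uses Lemma~\ref{lem:comb} to push the inner $B_\infty[\beta]$ factors through $\theta_j[\beta]$, and absorbs the shifted copies (there packaged as $\bar h=C_j\big(h^{-1}\big)$ and $\bar k=C_j\big(k^{-1}\big)$ multiplied from the outside so that everything cancels down to $p\theta_j[\beta]q$) into $B_\infty[\alpha]$ and $B_\infty[\gamma]$, with the explicit threshold $N=\max\{\spp p,\spp q,\spp h,\spp k,\alpha,\gamma\}+1$. The only cosmetic point is that your condition (b) should be $\beta+j>\max\{\spp p,\spp q\}$ (so the relevant generator indices differ by at least $2$), which is exactly what the paper's choice of $N$ ensures.
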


\begin{proof}
Since $p$ and $p'$ are representatives of the same double coset, there exist $r\in B_\infty[\alpha]$ and $h\in B_\infty[\beta]$ such that $p' = rph$. In a similar way, there exist $k\in B_\infty[\beta]$ and $s\in B_\infty[\gamma]$ such that $q' = kqs$. Therefore, \begin{gather*}
 \mathfrak{r'}_j = B_\infty[\alpha] p'\theta_j[\beta]q' B_\infty[\gamma] = B_\infty[\alpha] rph\theta_j[\beta]kqs B_\infty[\gamma] =B_\infty[\alpha] ph\theta_j[\beta]kq B_\infty[\gamma].
 \end{gather*} Consider $N = \max\{\spp p,\spp q,\spp h,\spp k,\alpha,\gamma\}+1$. Given $j\geq N$, let $\bar{h} = C_j\big(h^{-1}\big)$ and $\bar{k} = C_j\big(k^{-1}\big)$. Then $\bar{h},\bar{k}\in B_\infty[j+\beta]$ and hence $\bar{h}\in B_\infty[\gamma]$ and $\bar{k}\in B_\infty[\alpha]$. Furthermore, $\bar{h}$ commutes with $q$ and $k$, and $\bar{k}$ commutes with $p$ and $h$. Now,
 \begin{gather*}
 \bar{k}ph\theta_j[\beta]kq\bar{h} = ph\bar{k}\theta_j[\beta]k\bar{h}q = ph\bar{k}C_j(k)\theta_j[\beta]\bar{h}q \\
 \hphantom{\bar{k}ph\theta_j[\beta]kq\bar{h}}{} =phC_j(k^{-1})C_j(k)\theta_j[\beta]\bar{h}q = ph\theta_j[\beta]\bar{h}q = p\theta_j[\beta]C_j(h)\bar{h}q = p\theta_j[\beta]q.\tag*{\qed}
\end{gather*}\renewcommand{\qed}{}
\end{proof}

Therefore, for all pairs $(\mathfrak{p},\mathfrak{q})\in B_\infty[\alpha]\backslash B_\infty \slash B_\infty[\beta] \times B_\infty[\beta]\backslash B_\infty \slash B_\infty[\gamma]$ we have a well-defined product $\mathfrak{p}\circ\mathfrak{q}\in B_\infty[\alpha]\backslash B_\infty \slash B_\infty[\gamma]$ given by \begin{gather*}
\mathfrak{p}\circ\mathfrak{q} = B_\infty[\alpha] p\theta_j[\beta]q B_\infty[\gamma],
\end{gather*} $p\in\mathfrak{p}$, $q\in\mathfrak{q}$ and $j$ sufficiently large.

Finally, we are going to prove the associativity of the operation $\circ$.

\begin{Proposition}\label{th:asso} The product of double cosets is associative.\end{Proposition}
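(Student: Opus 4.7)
The plan is to express both $(\mathfrak{p}\circ\mathfrak{q})\circ\mathfrak{s}$ and $\mathfrak{p}\circ(\mathfrak{q}\circ\mathfrak{s})$ as the same double coset of the form $\mathfrak{w}(j,k) := B_\infty[\alpha]\,p\,\theta_j[\beta]\,q\,\theta_k[\gamma]\,s\,B_\infty[\delta]$ for suitable $j, k$, and then to show that $\mathfrak{w}(j,k)$ eventually stabilizes in $(j,k)$.

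Let $s$ be a representative of a third double coset $\mathfrak{s} \in B_\infty[\gamma]\backslash B_\infty/B_\infty[\delta]$. By Definition~\ref{def:prodnavy} and Proposition~\ref{prop:suflarg}, for $j$ sufficiently large the element $r := p\theta_j[\beta]q$ represents $\mathfrak{p}\circ\mathfrak{q}$; then for $k$ sufficiently large (depending on $r$), $r\theta_k[\gamma]s = p\theta_j[\beta]q\theta_k[\gamma]s$ represents $(\mathfrak{p}\circ\mathfrak{q})\circ\mathfrak{s}$, so $(\mathfrak{p}\circ\mathfrak{q})\circ\mathfrak{s} = \mathfrak{w}(j,k)$. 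Symmetrically, computing $\mathfrak{q}\circ\mathfrak{s}$ first yields $\mathfrak{p}\circ(\mathfrak{q}\circ\mathfrak{s}) = \mathfrak{w}(j',k')$ for another valid pair.

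It then suffices to show $\mathfrak{w}(j,k)$ is invariant under $j\mapsto j+1$ and under $k\mapsto k+1$ for $j,k$ sufficiently large. I would adapt Step~3 of Proposition~\ref{prop:suflarg}, writing $\theta_j[\beta] = u\theta_{j+1}[\beta]\ell$ with $u,\ell \in B_\infty[j+\beta]$: for $j$ large, $u$ commutes with $p$ and lies in $B_\infty[\alpha]$, so it is absorbed on the left, while $\ell$ commutes with $q$. The remaining factor $\ell\theta_k[\gamma]s$ can be handled either by direct commutativity (when the supports of $\ell$ and $\theta_k[\gamma]$ are disjoint) or by Lemma~\ref{lem:comb}(i), which replaces $\ell\theta_k[\gamma]$ by $\theta_k[\gamma]C_k(\ell)$; in either case, the surviving factor commutes past $s$ and lies in $B_\infty[\delta]$, so it is absorbed on the right. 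A symmetric argument gives the invariance in $k$.

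The main obstacle is the interplay between $j$ and $k$: Lemma~\ref{lem:comb}(i) requires $\ell \in \langle\sigma_{\gamma+1},\ldots,\sigma_{\gamma+k-1}\rangle$, which forces $k$ to be at least of order $2j$, whereas the direct commutativity approach requires $j$ to be at least of order $2k$. A careful case analysis covering both regimes, combined with Proposition~\ref{prop:rep} for independence of representatives, shows that $\mathfrak{w}(j_L,k_L) = \mathfrak{w}(j_R,k_R)$ for any LHS-valid $(j_L,k_L)$ and RHS-valid $(j_R,k_R)$, completing the proof.
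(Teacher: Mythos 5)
Your setup is correct and your diagnosis of the obstacle is accurate, but the proof is not complete: the deferred ``careful case analysis'' is precisely the content of the theorem, and the local moves you propose cannot carry it out. Both sides are indeed of the form $\mathfrak{w}(j,k)=B_\infty[\alpha]\,p\,\theta_j[\beta]\,q\,\theta_k[\gamma]\,s\,B_\infty[\delta]$, but $(\mathfrak{p}\circ\mathfrak{q})\circ\mathfrak{s}$ is only realized for pairs with $k\geq 2j+\beta+1$ (the outer $\theta$ must clear the support of $p\theta_j[\beta]q$, which is about $2j+\beta$), while $\mathfrak{p}\circ(\mathfrak{q}\circ\mathfrak{s})$ is only realized for pairs with $j$ at least about $2k+\gamma$. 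These two regions are disjoint. Now examine your move $j\mapsto j+1$: the element $\ell$ with $\ell^{-1}=\tau_j^{(j+1)}$ is supported on the generators $\sigma_{\beta+j+1},\ldots,\sigma_{2j+\beta+1}$, so it commutes with $\theta_k[\gamma]$ only when $\beta+j+1>2k+\gamma+1$, and it lies in $\langle\sigma_{\gamma+1},\ldots,\sigma_{\gamma+k-1}\rangle$ (the hypothesis of Lemma~\ref{lem:comb}) only when $2j+\beta+1\leq\gamma+k-1$. Each mechanism is therefore valid only inside one of the two regimes, and similarly for $k\mapsto k+1$. Since the moves change $(j,k)$ by unit steps, any chain joining an LHS-valid pair to an RHS-valid pair must cross the band $j\approx k$, where $\ell$ neither commutes with $\theta_k[\gamma]$ nor satisfies the hypothesis of Lemma~\ref{lem:comb}, and where $\ell$, being a single descending run of generators straddling the middle of $\theta_k[\gamma]$, cannot be split into a commuting part and a Lemma~\ref{lem:comb} part. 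What your argument actually establishes is that $\mathfrak{w}$ is constant on each of the two regions separately, i.e., that each side of the associativity identity is well defined --- which is already Propositions~\ref{prop:suflarg} and~\ref{prop:rep}. Proposition~\ref{prop:rep} does not bridge the gap either: it allows you to change the representatives $p$, $q$, $s$, not the relative placement of the two $\theta$'s.

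The paper's proof attacks exactly this crossing. Using the re-bracketing Lemma~\ref{lem:sym} (the ``matrix'' notation for products of shifted descending runs), it factors $\theta_k[\beta]$, $\theta_l[\gamma]$, $\theta_{l'}[\beta]$ and $\theta_k[\gamma]$ into explicit blocks $P$, $W$, $R_i$ and auxiliary elements $E,A,C,D,F,L,\tilde W$, and shows by a chain of such factorizations that one specific representative from each regime reduces to the common element $aPbWc$. Some explicit computation of this kind, tying together a pair with $k\approx 2j$ and a pair with $j\approx 2k$, is unavoidable; if you wish to keep your framework, you must prove directly that $\mathfrak{w}(j_L,k_L)=\mathfrak{w}(j_R,k_R)$ for one concrete choice of parameters in each regime, and that is essentially Steps 1--3 of the paper's argument rather than a routine verification.
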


\begin{proof}Let $\alpha, \beta,\gamma,\delta \in\mathbb{N}$ and consider $\mathfrak{a}\in B_\infty[\alpha]\backslash B_\infty\slash B_\infty[\beta]$, $\mathfrak{b}\in B_\infty[\beta]\backslash B_\infty\slash B_\infty[\gamma]$ and $\mathfrak{c}\in B_\infty[\gamma]\backslash B_\infty\slash B_\infty[\delta]$. Choose representatives $a\in\mathfrak{a}$, $ b\in\mathfrak{b}$ and $c\in\mathfrak{c}$ and consider $k=\max\{\alpha,\beta,\gamma,\delta,\spp a,\spp b,\spp c\}+1$. Then \begin{gather*}
\mathfrak{(ab)c} = B_\infty[\alpha] a\theta_k[\beta]b\theta_l[\gamma]c B_\infty[\delta] \qquad\text{and}\qquad \mathfrak{a(bc)}=B_\infty[\alpha] a\theta_{l'}[\beta]b\theta_k[\gamma]c B_\infty[\delta].
\end{gather*}

To prove our claim we are going to show that the double cosets above are the same, by exhibiting two representatives that are equal (Figs.~\ref{fig:dyn1} and~\ref{fig:dyn2} give an example of the process involved). Here we are assuming $\beta\leq\gamma$, the case $\gamma<\beta$ is analogous.

Throughout the rest of the proof we will use the symbol $a\equiv b$ to signify that $a$ and $b$ are representatives of the same double coset of $B_\infty[\alpha]\backslash B_\infty\slash B_\infty[\gamma]$, that is, we can find elements $h\in B_\infty[\alpha]$ and $k\in B_\infty[\gamma]$ such that $hak=b$.

Using the notation of Lemma~\ref{lem:sym} we can write \begin{gather*}
a\theta_k[\beta]b\theta_l[\gamma]c =a\begin{bmatrix}
 k+\beta&\rightarrow&\beta+1\\
 \downarrow& &\downarrow\\
 2k+\beta-1&\rightarrow&k+\beta
 \end{bmatrix}b\begin{bmatrix}
 2k+\beta+\gamma&\rightarrow&\gamma+1\\
 \downarrow& &\downarrow\\
 4k+2\beta+\gamma-1&\rightarrow&2k+\gamma+\beta
 \end{bmatrix}c, \\
 a\theta_{l'}[\beta]b\theta_k[\gamma]c = a\begin{bmatrix}
 2k+\gamma+\beta&\rightarrow&\beta+1\\
 \downarrow& &\downarrow\\
 4k+2\gamma+\beta-1&\rightarrow&2k+\gamma+\beta
 \end{bmatrix}b\begin{bmatrix}
 k+\gamma&\rightarrow&\gamma+1\\
 \downarrow& &\downarrow\\
 2k+\gamma-1&\rightarrow&k+\gamma
 \end{bmatrix}c.
\end{gather*}
 Using the same lemma, we can see that\begin{align*}
 \theta_k[\beta]&=R_1P;&&P = \begin{bmatrix} k+1&\!\!\rightarrow\!\!&\beta+1\\ \downarrow& &\downarrow\\ 2k&\!\!\rightarrow\!\!&k+\beta \end{bmatrix}\!\!\!,
 &&R_1=\begin{bmatrix} k+\beta&\!\!\rightarrow\!\!&k+2\\ \downarrow& &\downarrow\\ 2k+\beta-1&\!\!\rightarrow\!\!&2k+1 \end{bmatrix}\!\!\!,\\
 \theta_{l'}[\beta]&=R_2P_2;&&P_2 =\begin{bmatrix} k+1&\!\!\rightarrow\!\!&\beta+1\\ \downarrow& &\downarrow\\ 3k+\gamma&\!\!\rightarrow\!\!&2k+\beta+\gamma \end{bmatrix}\!\!\!,
 &&R_2=\begin{bmatrix} 2k+\beta+\gamma&\!\!\rightarrow\!\!&k+2\\ \downarrow& &\downarrow\\ 4k+2\gamma+\beta-1&\!\!\rightarrow\!\!&3k+\gamma+1 \end{bmatrix}\!\!\!,\\
 \theta_k[\gamma]&=P_3R_3;&&P_3 = \begin{bmatrix} k+\gamma&\!\!\rightarrow\!\!&\gamma+1\\ \downarrow& &\downarrow\\ 2k&\!\!\rightarrow\!\!&k+1 \end{bmatrix}\!\!\!,
 &&R_3=\begin{bmatrix} 2k+1&\!\!\rightarrow\!\!&k+2\\ \downarrow& &\downarrow\\ 2k+\gamma-1&\!\!\rightarrow\!\!&k+\gamma \end{bmatrix}\!\!\!,\\
 \theta_l[\gamma]&=P_4R_4;&&P_4 = \begin{bmatrix} 2k+\beta+\gamma&\!\!\rightarrow\!\!&\gamma+1\\ \downarrow& &\downarrow\\ 3k+\beta&\!\!\rightarrow\!\!&k+1 \end{bmatrix}\!\!\!,
 &&R_4=\begin{bmatrix} 3k+\beta&\!\!\rightarrow\!\!&k+2\\ \downarrow& &\downarrow\\ 4k+2\beta+\gamma-1&\!\!\rightarrow\!\!&2k+\beta+\gamma \end{bmatrix}\!\!\!.
 \end{align*}
Since $R_i\in B_\infty[{k+1}]$, $1\leq i \leq 4$, we have\begin{gather*}
aR_1PbP_4R_4c=R_1aPbP_4cR_4 \equiv aPbP_4c,\qquad aR_2P_2bP_3R_3c=R_2aP_2bP_3cR_3 \equiv aP_2bP_3c.
\end{gather*}
Notice also that $P_4=R_5W$, where \begin{gather*} R_5 = \begin{bmatrix} 2k+\beta+\gamma&\rightarrow&2k+2\\ \downarrow& &\downarrow\\ 3k+\beta&\rightarrow&3k-\gamma+2 \end{bmatrix}
 \qquad\text{and}\qquad W=\begin{bmatrix} 2k+1&\rightarrow&\gamma+1\\ \downarrow& &\downarrow\\ 3k-\gamma+1&\rightarrow&k+1 \end{bmatrix}.
 \end{gather*}
Since $\spp P = 2k$ and $R_5\in B_\infty[{2k+1}]$, $R_5P=PR_5$ and we have $aPbR_5Wc=aPR_5bWc = R_5aPbWc\equiv aPbWc$.

Our next objective is to find elements $E,A\in B_\infty$ such that $aP_2bP_3c\equiv aPbEAWc$.

{\it Step 1.} $aP_2bP_3c=aPbELP_3c$. Consider the element\begin{gather*}
 F =\begin{bmatrix} 2k+1&\rightarrow&k+\beta+1\\ \downarrow& &\downarrow\\ 3k+\gamma&\rightarrow&2k+\beta+\gamma \end{bmatrix},
\end{gather*}and notice that $P_2=PF$. Since $F\in B_\infty[{k}]$, we see that $bF=Fb$.

 Moreover, $F=EL$ where \begin{gather*}
 E =\begin{bmatrix} 2k+1&\rightarrow &k+\beta+1\\ \downarrow& &\downarrow\\ 2k-\beta+\gamma&\rightarrow&k+\gamma\end{bmatrix}\qquad\text{and}\qquad L=\begin{bmatrix} 2k-\beta+\gamma+1&\rightarrow&k+\gamma+1\\ \downarrow& &\downarrow\\ 3k+\gamma&\rightarrow&2k+\beta+\gamma \end{bmatrix}.
\end{gather*}

{\it Step 2.} $LP_3c \equiv CP_3c$ for some $C$. In fact, consider \begin{gather*}
C =\begin{bmatrix} 2k+\gamma-\beta+1&\rightarrow &k+\gamma+1\\ \downarrow& &\downarrow\\ 3k-\beta+1&\rightarrow&2k+1\end{bmatrix}\qquad\text{and}\qquad D=\begin{bmatrix} 3k-\beta+2&\rightarrow&2k+2\\ \downarrow& &\downarrow\\ 3k+\gamma&\rightarrow&2k+\beta+\gamma \end{bmatrix}.
\end{gather*}Then $L=CD$ and, since $D\in B_\infty[{2k+1}]$ and $\spp P_3 = 2k$, we have $DP_3c = P_3cD\equiv P_3c$. Hence $LP_3c \equiv CP_3c$.

{\it Step 3.} $CP_3=AW$ for some $A$. In fact, consider $A=\begin{bmatrix} 2k+\gamma-\beta+1&\rightarrow &2k+2\\ \downarrow& &\downarrow\\ 3k-\beta+1&\rightarrow&3k-\gamma+2\end{bmatrix}$. Then,
\begin{gather*}
CP_3 = \begin{bmatrix} 2k+\gamma-\beta+1&\rightarrow &k+\gamma+1\\ \downarrow& &\downarrow\\ 3k-\beta+1&\rightarrow&2k+1\end{bmatrix}\begin{bmatrix} k+\gamma&\rightarrow&\gamma+1\\ \downarrow& &\downarrow\\ 2k&\rightarrow&k+1 \end{bmatrix} \\
\hphantom{CP_3}{}
 =\begin{bmatrix} 2k+\gamma-\beta+1&\!\!\rightarrow\!\! &2k+2\\ \downarrow& &\downarrow\\ 3k-\beta+1&\!\!\rightarrow\!\!&3k-\gamma+2\end{bmatrix}
\begin{bmatrix} 2k+1&\!\!\rightarrow\!\! &k+\gamma+1\\ \downarrow& &\downarrow\\ 3k-\gamma+1&\!\!\rightarrow\!\!&2k+1\end{bmatrix}
\begin{bmatrix} k+\gamma&\!\!\rightarrow\!\!&\gamma+1\\ \downarrow& &\downarrow\\ 2k&\!\!\rightarrow\!\!&k+1 \end{bmatrix}\\
\hphantom{CP_3}{}
=\begin{bmatrix} 2k+\gamma-\beta+1&\rightarrow &2k+2\\ \downarrow& &\downarrow\\ 3k-\beta+1&\rightarrow&3k-\gamma+2\end{bmatrix}\begin{bmatrix} 2k+1&\rightarrow&\gamma+1\\ \downarrow& &\downarrow\\ 3k-\gamma+1&\rightarrow&k+1 \end{bmatrix}=AW.
\end{gather*}

 Therefore, $aP_2bP_3c=aPbELP_3c\equiv aPbECP_3c=aPbEAWc$. At last, consider \begin{gather*}
\tilde{W}=\begin{bmatrix} 3k-\beta+2&\rightarrow &k+2\\ \downarrow& &\downarrow\\ 4k-2\beta+\gamma+1&\rightarrow&2k-\beta+\gamma+1\end{bmatrix}.
\end{gather*}Then $AW\tilde{W}=\theta_{r}[\gamma]$ with $r=2k-\beta+1$. Hence $aPbEAWc\equiv aPbE\theta_{r}[\gamma]c$ and, by Lemma \ref{lem:comb}, $E\theta_{r}[\gamma] = \theta_{r}[\gamma]C_{r}(E)$. Therefore, \begin{gather*}
aPbE\theta_{r}[\gamma]c = aPb\theta_{r}[\gamma]C_{r}(E)c = aPb\theta_{r}[\gamma]cC_{r}(E)\equiv aPb\theta_{r}[\gamma]c \equiv aPbAWc.
\end{gather*} Furthermore, since $A\in B_\infty[{2k+1}]$ and $\spp P = 2k$, \begin{gather*}
aPbAWc = aPAbWc = AaPbWc \equiv aPbWc.\tag*{\qed}
\end{gather*}\renewcommand{\qed}{}
\end{proof}

\begin{Example} In this example we illustrate the method described in the proof of the theorem above. Here we used $a=\sigma_2^{-1} \sigma_1^{-1}$, $b=\sigma_1^2$, $c=\sigma_1^2 \sigma_2^2$, $\alpha=\delta=3$, $\beta=1$ and $\gamma=2$. In each of the figures below, the diagrams are different representatives of the same double coset, obtained following the steps of the proof of Proposition~\ref{theorem:opbr}. The dashed horizontal lines highlight the different braids mentioned in the captions.

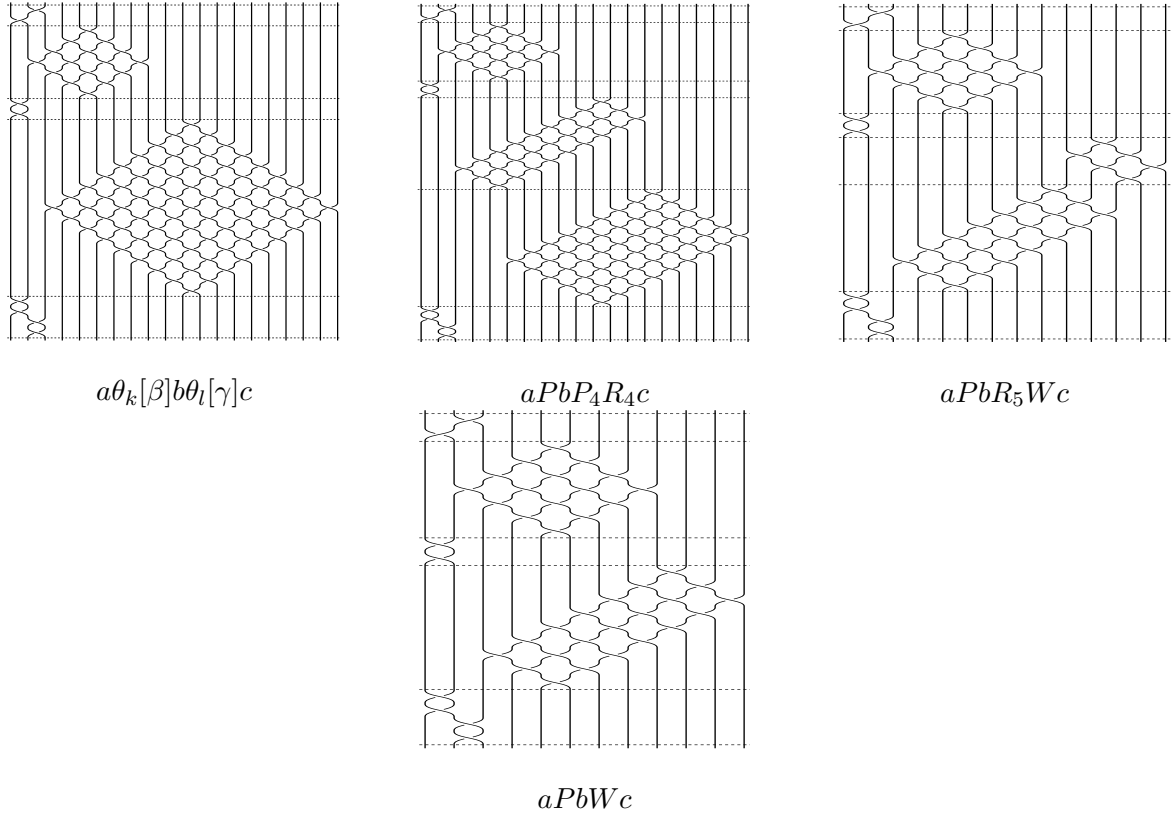
\begin{figure}[h!]\centering
\noindent\mbox{%
\begin{minipage}{4.5cm}
\resizebox{4.5cm}{4.5cm}{
\begin{tikzpicture}
\draw[dashed] (0.8,-.250) -- (20.2,-.250);
\draw[dashed] (0.8,-2.25) -- (20.2,-2.25);
\draw[dashed] (0.8,-9.25) -- (20.2,-9.25);
\draw[dashed] (0.8,-11.25) -- (20.2,-11.25);
\draw[dashed] (0.8,-28.25) -- (20.2,-28.25);
\draw[dashed] (0.8,-32.25) -- (20.2,-32.25);
\braid
s_2^{-1} s_1^{-1}

s_{5}
s_{4}-s_{6}
s_{3}-s_{5}-s_{7}
s_{2}-s_{4}-s_{6}-s_{8}%
s_{3}-s_{5}-s_{7}
s_{4}-s_{6}
s_{5}

s_1 s_1
s_{11}
s_{10}-s_{12}
s_{9}-s_{11}-s_{13}
s_{8}-s_{10}-s_{12}-s_{14}
s_{7}-s_{9}-s_{11}-s_{13}-s_{15}
s_{6}-s_{8}-s_{10} -s_{12}-s_{14}-s_{16}
s_{5}-s_{7}-s_{9} -s_{11}-s_{13}-s_{15}-s_{17}
s_{4}-s_{6}-s_{8} -s_{10}-s_{12}-s_{14}-s_{16}-s_{18}
s_{3}-s_{5}-s_{7}-s_{9}-s_{11}-s_{13}-s_{15}-s_{17}-s_{19}%
s_{4}-s_{6}-s_{8}-s_{10}-s_{12}-s_{14}-s_{16}-s_{18}
s_{5}-s_{7}-s_{9}-s_{11}-s_{13}-s_{15}-s_{17}
s_{6}-s_{8}-s_{10}-s_{12}-s_{14}-s_{16}
s_{7}-s_{9}-s_{11}-s_{13}-s_{15}
s_{8}-s_{10}-s_{12}-s_{14}
s_{9}-s_{11}-s_{13}
s_{10}-s_{12}
s_{11}

s_1 s_1 s_2 s_2
;

\end{tikzpicture}}%
\begin{center}
$a\theta_k[\beta]b\theta_l[\gamma]c$
\end{center}
\end{minipage}}

\hspace{.7cm}%
\noindent\mbox{%
\begin{minipage}{4.5cm}
\resizebox{4.5cm}{4.5cm}{
\begin{tikzpicture}
\draw[dashed] (0.8,-.250) -- (20.2,-.250);
\draw[dashed] (0.8,-2.25) -- (20.2,-2.25);
\draw[dashed] (0.8,-9.25) -- (20.2,-9.25);
\draw[dashed] (0.8,-11.25) -- (20.2,-11.25);
\draw[dashed] (0.8,-22.25) -- (20.2,-22.25);
\draw[dashed] (0.8,-36.25) -- (20.2,-36.25);
\draw[dashed] (0.8,-40.25) -- (20.2,-40.25);

\braid
s_2^{-1} s_1^{-1}

s_{5}
s_{4}-s_{6}
s_{3}-s_{5}-s_{7}
s_{2}-s_{4}-s_{6}-s_{8}%
s_{3}-s_{5}-s_{7}
s_{4}-s_{6}
s_{5}

s_1 s_1
s_{11}
s_{10}-s_{12}
s_{9}-s_{11}-s_{13}
s_{8}-s_{10}-s_{12}
s_{7}-s_{9}-s_{11}
s_{6}-s_{8}-s_{10} 
s_{5}-s_{7}-s_{9} 
s_{4}-s_{6}-s_{8} 
s_{3}-s_{5}-s_{7} 
s_{4}-s_{6}
s_{5}

s_{14}
s_{13}-s_{15}
s_{12}-s_{14}-s_{16}
s_{11}-s_{13}-s_{15}-s_{17}
s_{10}-s_{12}-s_{14}-s_{16}-s_{18}
s_{9}-s_{11}-s_{13}-s_{15}-s_{17}-s_{19}%
s_{8}-s_{10}-s_{12}-s_{14}-s_{16}-s_{18}
s_{7}-s_{9}-s_{11}-s_{13}-s_{15}-s_{17}
s_{6}-s_{8}-s_{10}-s_{12}-s_{14}-s_{16}
s_{7}-s_{9}-s_{11}-s_{13}-s_{15}
s_{8}-s_{10}-s_{12}-s_{14}
s_{9}-s_{11}-s_{13}
s_{10}-s_{12}
s_{11}

s_1 s_1 s_2 s_2
;

\end{tikzpicture}}%
\begin{center}
$aPbP_4R_4c$
\end{center}
\end{minipage}}

\hspace{.7cm}
\noindent\mbox{%
\begin{minipage}{4.5cm}
\resizebox{4.5cm}{4.5cm}{
\begin{tikzpicture}
\draw[dashed] (0.8,-.250) -- (14.2,-.250);
\draw[dashed] (0.8,-2.25) -- (14.2,-2.25);
\draw[dashed] (0.8,-9.25) -- (14.2,-9.25);
\draw[dashed] (0.8,-11.25) -- (14.2,-11.25);
\draw[dashed] (0.8,-15.25) -- (14.2,-15.25);
\draw[dashed] (0.8,-24.25) -- (14.2,-24.25);
\draw[dashed] (0.8,-28.25) -- (14.2,-28.25);
\braid
s_2^{-1} s_1^{-1}

s_{5}
s_{4}-s_{6}
s_{3}-s_{5}-s_{7}
s_{2}-s_{4}-s_{6}-s_{8}
s_{3}-s_{5}-s_{7}
s_{4}-s_{6}
s_{5}

s_1 s_1
s_{11}
s_{10}-s_{12}
s_{11}-s_{13}
s_{12}
s_{9}
s_{8}-s_{10}
s_{7}-s_{9}-s_{11} 
s_{6}-s_{8}-s_{10} 
s_{5}-s_{7}-s_{9} 
s_{4}-s_{6}-s_{8} 
s_{3}-s_{5}-s_{7} 
s_{4}-s_{6}
s_{5}

s_1 s_1 s_2 s_2
;

\end{tikzpicture}}%
\begin{center}
$aPbR_5Wc$
\end{center}
\end{minipage}}

\hspace{.7cm}
\noindent\mbox{%
\begin{minipage}{4.5cm}
\resizebox{4.5cm}{4.5cm}{
\begin{tikzpicture}
\draw[dashed] (0.8,-.250) -- (12.2,-.250);
\draw[dashed] (0.8,-2.25) -- (12.2,-2.25);
\draw[dashed] (0.8,-9.25) -- (12.2,-9.25);
\draw[dashed] (0.8,-11.25) -- (12.2,-11.25);
\draw[dashed] (0.8,-20.25) -- (12.2,-20.25);
\draw[dashed] (0.8,-24.25) -- (12.2,-24.25);
\braid
s_2^{-1} s_1^{-1}

s_{5}
s_{4}-s_{6}
s_{3}-s_{5}-s_{7}
s_{2}-s_{4}-s_{6}-s_{8}%
s_{3}-s_{5}-s_{7}
s_{4}-s_{6}
s_{5}

s_1 s_1
%
s_{9}
s_{8}-s_{10}
s_{7}-s_{9}-s_{11} 
s_{6}-s_{8}-s_{10} 
s_{5}-s_{7}-s_{9} 
s_{4}-s_{6}-s_{8} 
s_{3}-s_{5}-s_{7} 
s_{4}-s_{6}
s_{5}

s_1 s_1 s_2 s_2
;

\end{tikzpicture}}%
\begin{center}
$aPbWc$
\end{center}
\end{minipage}}

\caption{The equality $a\theta_k[\beta]b\theta_l[\gamma]c=aPbWc$.}\label{fig:dyn1}
\end{figure}

\begin{figure}[h!]\centering
\noindent\mbox{%
\begin{minipage}{4.5cm}
\resizebox{4.5cm}{4.5cm}{
\begin{tikzpicture}
\draw[dashed] (0.8,-.250) -- (21.2,-.250);
\draw[dashed] (0.8,-2.25) -- (21.2,-2.25);
\draw[dashed] (0.8,-21.25) -- (21.2,-21.25);
\draw[dashed] (0.8,-23.25) -- (21.2,-23.25);
\draw[dashed] (0.8,-30.25) -- (21.2,-30.25);
\draw[dashed] (0.8,-34.25) -- (21.2,-34.25);
\braid
s_2^{-1} s_1^{-1}

s_{11}
s_{10}-s_{12}
s_{9}-s_{11}-s_{13}
s_{8}-s_{10}-s_{12}-s_{14}
s_{7}-s_{9}-s_{11}-s_{13}-s_{15}
s_{6}-s_{8}-s_{10}-s_{12}-s_{14}-s_{16}
s_{5}-s_{7}-s_{9}-s_{11}-s_{13}-s_{15}-s_{17}
s_{4}-s_{6}-s_{8}-s_{10}-s_{12}-s_{14}-s_{16}-s_{18}
s_{3}-s_{5}-s_{7}-s_{9}-s_{11}-s_{13}-s_{15}-s_{17}-s_{19}
s_{2}-s_{4}-s_{6}-s_{8}-s_{10}-s_{12}-s_{14}-s_{16}-s_{18}-s_{20}
s_{3}-s_{5}-s_{7}-s_{9}-s_{11}-s_{13}-s_{15}-s_{17}-s_{19}
s_{4}-s_{6}-s_{8}-s_{10}-s_{12}-s_{14}-s_{16}-s_{18}
s_{5}-s_{7}-s_{9}-s_{11}-s_{13}-s_{15}-s_{17}
s_{6}-s_{8}-s_{10}-s_{12}-s_{14}-s_{16}
s_{7}-s_{9}-s_{11}-s_{13}-s_{15}
s_{8}-s_{10}-s_{12}-s_{14}
s_{9}-s_{11}-s_{13}
s_{10}-s_{12}
s_{11}

s_1 s_1%

s_{6}
s_{5}-s_{7}
s_{4}-s_{6}-s_{8}
s_{3}-s_{5}-s_{7}-s_{9}
s_{4}-s_{6}-s_{8}
s_{5}-s_{7}
s_{6}

s_1 s_1 s_2 s_2
;

\end{tikzpicture}}%
\begin{center}
$a\theta_{l'}[\beta]b\theta_k[\gamma]c$
\end{center}
\end{minipage}}

\hspace{.7cm}
\noindent\mbox{%
\begin{minipage}{4.5cm}
\resizebox{4.5cm}{4.5cm}{
\begin{tikzpicture}
\draw[dashed] (0.8,-.250) -- (21.2,-.250);
\draw[dashed] (0.8,-2.25) -- (21.2,-2.25);
\draw[dashed] (0.8,-17.25) -- (21.2,-17.25);
\draw[dashed] (0.8,-30.25) -- (21.2,-30.25);
\draw[dashed] (0.8,-32.25) -- (21.2,-32.25);
\draw[dashed] (0.8,-38.25) -- (21.2,-38.25);
\draw[dashed] (0.8,-42.25) -- (21.2,-42.25);
\draw[dashed] (0.8,-46.25) -- (21.2,-46.25);

\braid
s_2^{-1} s_1^{-1}

s_{11}
s_{10}-s_{12}
s_{9}-s_{11}-s_{13}
s_{8}-s_{10}-s_{12}-s_{14}
s_{7}-s_{9}-s_{11}-s_{13}-s_{15}
s_{6}-s_{8}-s_{10}-s_{12}-s_{14}-s_{16}
s_{7}-s_{9}-s_{11}-s_{13}-s_{15}-s_{17}
s_{8}-s_{10}-s_{12}-s_{14}-s_{16}-s_{18}
s_{9}-s_{11}-s_{13}-s_{15}-s_{17}-s_{19}
s_{10}-s_{12}-s_{14}-s_{16}-s_{18}-s_{20}
s_{11}-s_{13}-s_{15}-s_{17}-s_{19}
s_{12}-s_{14}-s_{16}-s_{18}
s_{13}-s_{15}-s_{17}
s_{14}-s_{16}
s_{15}
s_{5}
s_{4}-s_{6}
s_{3}-s_{5}-s_{7}
s_{2}-s_{4}-s_{6}-s_{8}
s_{3}-s_{5}-s_{7}-s_{9}
s_{4}-s_{6}-s_{8}-s_{10}
s_{5}-s_{7}-s_{9}-s_{11}
s_{6}-s_{8}-s_{10}-s_{12}
s_{7}-s_{9}-s_{11}-s_{13}
s_{8}-s_{10}-s_{12}-s_{14}
s_{9}-s_{11}-s_{13}
s_{10}-s_{12}
s_{11}

s_1 s_1

s_{6}
s_{5}-s_{7}
s_{4}-s_{6}-s_{8}
s_{3}-s_{5}-s_{7}
s_{4}-s_{6}
s_{5}
s_{9}
s_{8}
s_{7}
s_{6}

s_1 s_1 s_2 s_2
;

\end{tikzpicture}}%
\begin{center}
$aR_2P_2bP_3R_3c$
\end{center}
\end{minipage}}

\hspace{.7cm}
\noindent\mbox{%
\begin{minipage}{4.5cm}
\resizebox{4.5cm}{4.5cm}{
\begin{tikzpicture}
\draw[dashed] (0.8,-.250) -- (15.2,-.250);
\draw[dashed] (0.8,-2.25) -- (15.2,-2.25);
\draw[dashed] (0.8,-9.25) -- (15.2,-9.25);
\draw[dashed] (0.8,-11.25) -- (15.2,-11.25);
\draw[dashed] (0.8,-20.25) -- (15.2,-20.25);
\draw[dashed] (0.8,-26.25) -- (15.2,-26.25);
\draw[dashed] (0.8,-30.25) -- (15.2,-30.25);

\braid
s_2^{-1} s_1^{-1}


s_{5}
s_{4}-s_{6}
s_{3}-s_{5}-s_{7}
s_{2}-s_{4}-s_{6}-s_{8}
s_{3}-s_{5}-s_{7}
s_{4}-s_{6}
s_{5}

s_1 s_1
s_{9}
s_{8}-s_{10}
s_{7}-s_{9}-s_{11}
s_{6}-s_{8}-s_{10}-s_{12}
s_{7}-s_{9}-s_{11}-s_{13}
s_{8}-s_{10}-s_{12}-s_{14}
s_{9}-s_{11}-s_{13}
s_{10}-s_{12}
s_{11}

s_{6}
s_{5}-s_{7}
s_{4}-s_{6}-s_{8}
s_{3}-s_{5}-s_{7}
s_{4}-s_{6}
s_{5}

s_1 s_1 s_2 s_2
;

\end{tikzpicture}}%
\begin{center}
$aPbFP_3c$
\end{center}
\end{minipage}}

\hspace{.7cm}
\noindent\mbox{%
\begin{minipage}{4.5cm}
\resizebox{4.5cm}{4.5cm}{
\begin{tikzpicture}
\draw[dashed] (0.8,-.250) -- (15.2,-.250);
\draw[dashed] (0.8,-2.25) -- (15.2,-2.25);
\draw[dashed] (0.8,-9.25) -- (15.2,-9.25);
\draw[dashed] (0.8,-11.25) -- (15.2,-11.25);
\draw[dashed] (0.8,-15.25) -- (15.2,-15.25);
\draw[dashed] (0.8,-23.25) -- (15.2,-23.25);
\draw[dashed] (0.8,-29.25) -- (15.2,-29.25);
\draw[dashed] (0.8,-33.25) -- (15.2,-33.25);

\braid
s_2^{-1} s_1^{-1}

%

s_{5}
s_{4}-s_{6}
s_{3}-s_{5}-s_{7}
s_{2}-s_{4}-s_{6}-s_{8}
s_{3}-s_{5}-s_{7}
s_{4}-s_{6}
s_{5}

s_1 s_1

s_{9}
s_{8}
s_{7}
s_{6}
s_{10}
s_{9}-s_{11}
s_{8}-s_{10}-s_{12}
s_{7}-s_{9}-s_{11}-s_{13}
s_{8}-s_{10}-s_{12}-s_{14}
s_{9}-s_{11}-s_{13}
s_{10}-s_{12}
s_{11}

s_{6}
s_{5}-s_{7}
s_{4}-s_{6}-s_{8}
s_{3}-s_{5}-s_{7}
s_{4}-s_{6}
s_{5}

s_1 s_1 s_2 s_2
;

\end{tikzpicture}}%
\begin{center}
$aPbELP_3c$
\end{center}
\end{minipage}}

\hspace{.7cm}
\noindent\mbox{%
\begin{minipage}{4.5cm}
\resizebox{4.5cm}{4.5cm}{
\begin{tikzpicture}
\draw[dashed] (0.8,-.250) -- (15.2,-.250);
\draw[dashed] (0.8,-2.25) -- (15.2,-2.25);
\draw[dashed] (0.8,-9.25) -- (15.2,-9.25);
\draw[dashed] (0.8,-11.25) -- (15.2,-11.25);
\draw[dashed] (0.8,-15.25) -- (15.2,-15.25);
\draw[dashed] (0.8,-21.25) -- (15.2,-21.25);
\draw[dashed] (0.8,-26.25) -- (15.2,-26.25);
\draw[dashed] (0.8,-32.25) -- (15.2,-32.25);
\draw[dashed] (0.8,-36.25) -- (15.2,-36.25);

\braid
s_2^{-1} s_1^{-1}

s_{5}
s_{4}-s_{6}
s_{3}-s_{5}-s_{7}
s_{2}-s_{4}-s_{6}-s_{8}
s_{3}-s_{5}-s_{7}
s_{4}-s_{6}
s_{5}

s_1 s_1

s_{9}
s_{8}
s_{7}
s_{6}
s_{10}
s_{9}-s_{11}
s_{8}-s_{10}-s_{12}
s_{7}-s_{9}-s_{11}
s_{8}-s_{10}
s_{9}
s_{13}
s_{12}-s_{14}
s_{11}-s_{13}
s_{10}-s_{12}
s_{11}

s_{6}
s_{5}-s_{7}
s_{4}-s_{6}-s_{8}
s_{3}-s_{5}-s_{7}
s_{4}-s_{6}
s_{5}

s_1 s_1 s_2 s_2
;

\end{tikzpicture}}%
\begin{center}
$aPbECDP_3c$
\end{center}
\end{minipage}}

\hspace{.7cm}
\noindent\mbox{%
\begin{minipage}{4.5cm}
\resizebox{4.5cm}{4.5cm}{
\begin{tikzpicture}
\draw[dashed] (0.8,-.250) -- (13.2,-.250);
\draw[dashed] (0.8,-2.25) -- (13.2,-2.25);
\draw[dashed] (0.8,-9.25) -- (13.2,-9.25);
\draw[dashed] (0.8,-11.25) -- (13.2,-11.25);
\draw[dashed] (0.8,-15.25) -- (13.2,-15.25);
\draw[dashed] (0.8,-25.25) -- (13.2,-25.25);
\draw[dashed] (0.8,-29.25) -- (13.2,-29.25);

\braid
s_2^{-1} s_1^{-1}


s_{5}
s_{4}-s_{6}
s_{3}-s_{5}-s_{7}
s_{2}-s_{4}-s_{6}-s_{8}
s_{3}-s_{5}-s_{7}
s_{4}-s_{6}
s_{5}

s_1 s_1

s_{9}
s_{8}
s_{7}
s_{6}


s_{10}
s_{9}-s_{11} 
s_{8}-s_{10}-s_{12}
s_{7}-s_{9}-s_{11}
s_{6}-s_{8}-s_{10}
s_{5}-s_{7}-s_{9}
s_{4}-s_{6}-s_{8}
s_{3}-s_{5}-s_{7}
s_{4}-s_{6}
s_{5}

s_1 s_1 s_2 s_2
;

\end{tikzpicture}}%
\begin{center}
$aPbEAWc$
\end{center}
\end{minipage}}

\hspace{.7cm}
\noindent\mbox{%
\begin{minipage}{4.5cm}
\resizebox{4.5cm}{4.5cm}{
\begin{tikzpicture}
\draw[dashed] (0.8,-.250) -- (18.2,-.250);
\draw[dashed] (0.8,-2.25) -- (18.2,-2.25);
\draw[dashed] (0.8,-9.25) -- (18.2,-9.25);
\draw[dashed] (0.8,-11.25) -- (18.2,-11.25);
\draw[dashed] (0.8,-15.25) -- (18.2,-15.25);
\draw[dashed] (0.8,-30.25) -- (18.2,-30.25);
\draw[dashed] (0.8,-34.25) -- (18.2,-34.25);
\braid
s_2^{-1} s_1^{-1}


s_{5}
s_{4}-s_{6}
s_{3}-s_{5}-s_{7}
s_{2}-s_{4}-s_{6}-s_{8}
s_{3}-s_{5}-s_{7}
s_{4}-s_{6}
s_{5}

s_1 s_1

s_{9}
s_{8}
s_{7}
s_{6}


%
s_{10}

s_{9}-s_{11}
s_{8}-s_{10}-s_{12}
s_{7}-s_{9}-s_{11}-s_{13}
s_{6}-s_{8}-s_{10}-s_{12}-s_{14}
s_{5}-s_{7}-s_{9}-s_{11}-s_{13}-s_{15}
s_{4}-s_{6}-s_{8}-s_{10}-s_{12}-s_{14}-s_{16}
s_{3}-s_{5}-s_{7}-s_{9}-s_{11}-s_{13}-s_{15}-s_{17}
s_{4}-s_{6}-s_{8}-s_{10}-s_{12}-s_{14}-s_{16}
s_{5}-s_{7}-s_{9}-s_{11}-s_{13}-s_{15}
s_{6}-s_{8}-s_{10}-s_{12}-s_{14}
s_{7}-s_{9}-s_{11}-s_{13}
s_{8}-s_{10}-s_{12}
s_{9}-s_{11}
s_{10}

s_1 s_1 s_2 s_2
;

\end{tikzpicture}}%
\begin{center}
$aPbE\theta_t[\gamma]c$
\end{center}
\end{minipage}}

\hspace{.7cm}
\noindent\mbox{%
\begin{minipage}{4.5cm}
\resizebox{4.5cm}{4.5cm}{
\begin{tikzpicture}
\draw[dashed] (0.8,-.250) -- (18.2,-.250);
\draw[dashed] (0.8,-2.25) -- (18.2,-2.25);
\draw[dashed] (0.8,-9.25) -- (18.2,-9.25);
\draw[dashed] (0.8,-11.25) -- (18.2,-11.25);
\draw[dashed] (0.8,-26.25) -- (18.2,-26.25);
\draw[dashed] (0.8,-30.25) -- (18.2,-30.25);
\draw[dashed] (0.8,-34.25) -- (18.2,-34.25);
\braid
s_2^{-1} s_1^{-1}


s_{5}
s_{4}-s_{6}
s_{3}-s_{5}-s_{7}
s_{2}-s_{4}-s_{6}-s_{8}
s_{3}-s_{5}-s_{7}
s_{4}-s_{6}
s_{5}


s_1 s_1

s_{10}

s_{9}-s_{11}
s_{8}-s_{10}-s_{12}
s_{7}-s_{9}-s_{11}-s_{13}
s_{6}-s_{8}-s_{10}-s_{12}-s_{14}
s_{5}-s_{7}-s_{9}-s_{11}-s_{13}-s_{15}
s_{4}-s_{6}-s_{8}-s_{10}-s_{12}-s_{14}-s_{16}
s_{3}-s_{5}-s_{7}-s_{9}-s_{11}-s_{13}-s_{15}-s_{17}
s_{4}-s_{6}-s_{8}-s_{10}-s_{12}-s_{14}-s_{16}
s_{5}-s_{7}-s_{9}-s_{11}-s_{13}-s_{15}
s_{6}-s_{8}-s_{10}-s_{12}-s_{14}
s_{7}-s_{9}-s_{11}-s_{13}
s_{8}-s_{10}-s_{12}
s_{9}-s_{11}
s_{10}

s_{17}
s_{16}
s_{15}
s_{14}

s_1 s_1 s_2 s_2
;

\end{tikzpicture}}%
\begin{center}
$aPb\theta_t[\gamma]C_t(E)c$
\end{center}
\end{minipage}}

\hspace{.7cm}
\noindent\mbox{%
\begin{minipage}{4.5cm}
\resizebox{4.5cm}{4.5cm}{
\begin{tikzpicture}
\draw[dashed] (0.8,-.250) -- (18.2,-.250);
\draw[dashed] (0.8,-2.25) -- (18.2,-2.25);
\draw[dashed] (0.8,-9.25) -- (18.2,-9.25);
\draw[dashed] (0.8,-11.25) -- (18.2,-11.25);
\draw[dashed] (0.8,-14.25) -- (18.2,-14.25);
\draw[dashed] (0.8,-23.25) -- (18.2,-23.25);
\draw[dashed] (0.8,-35.25) -- (18.2,-35.25);
\draw[dashed] (0.8,-39.25) -- (18.2,-39.25);
\braid
s_2^{-1} s_1^{-1}

s_{5}
s_{4}-s_{6}
s_{3}-s_{5}-s_{7}
s_{2}-s_{4}-s_{6}-s_{8}
s_{3}-s_{5}-s_{7}
s_{4}-s_{6}
s_{5}

s_1 s_1

s_{10}
s_{11}
s_{12}
s_{9} 
s_{8}-s_{10} 
s_{7}-s_{9}-s_{11}
s_{6}-s_{8}-s_{10}
s_{5}-s_{7}-s_{9}
s_{4}-s_{6}-s_{8}
s_{3}-s_{5}-s_{7}
s_{4}-s_{6}
s_{5}

s_{13}
s_{12}-s_{14}
s_{11}-s_{13}-s_{15}
s_{10}-s_{12}-s_{14}-s_{16}
s_{9}-s_{11}-s_{13}-s_{15}-s_{17}
s_{8}-s_{10}-s_{12}-s_{14}-s_{16}
s_{7}-s_{9}-s_{11}-s_{13}-s_{15}
s_{6}-s_{8}-s_{10}-s_{12}-s_{14}
s_{7}-s_{9}-s_{11}-s_{13}
s_{8}-s_{10}-s_{12}
s_{9}-s_{11}
s_{10}

s_1 s_1 s_2 s_2
;

\end{tikzpicture}}%
\begin{center}
$aPbAW\tilde{W}c$
\end{center}
\end{minipage}}

\hspace{.7cm}
\noindent\mbox{%
\begin{minipage}{4.5cm}
\resizebox{4.5cm}{4.5cm}{
\begin{tikzpicture}
\draw[dashed] (0.8,-.250) -- (12.2,-.250);
\draw[dashed] (0.8,-2.25) -- (12.2,-2.25);
\draw[dashed] (0.8,-9.25) -- (12.2,-9.25);
\draw[dashed] (0.8,-11.25) -- (12.2,-11.25);
\draw[dashed] (0.8,-20.25) -- (12.2,-20.25);
\draw[dashed] (0.8,-24.25) -- (12.2,-24.25);
\braid
s_2^{-1} s_1^{-1}

s_{5}
s_{4}-s_{6}
s_{3}-s_{5}-s_{7}
s_{2}-s_{4}-s_{6}-s_{8}%
s_{3}-s_{5}-s_{7}
s_{4}-s_{6}
s_{5}

s_1 s_1
%
s_{9}
s_{8}-s_{10}
s_{7}-s_{9}-s_{11} 
s_{6}-s_{8}-s_{10} 
s_{5}-s_{7}-s_{9} 
s_{4}-s_{6}-s_{8} 
s_{3}-s_{5}-s_{7} 
s_{4}-s_{6}
s_{5}

s_1 s_1 s_2 s_2
;

\end{tikzpicture}}%
\begin{center}
$aPbWc$
\end{center}
\end{minipage}}

\caption{The equality $a\theta_{l'}[\beta]b\theta_k[\gamma]c=aPbWc$.}\label{fig:dyn2}
\end{figure}
\end{Example}

\subsection{Proof of Proposition \ref{prop:mene}}

We show that the conjugacy problem for $B_\infty$ can be reduced to a conjugacy problem in $B_n$, for some $n$.
Given two braids $x,y\in B_\infty$, since these braids are finitely supported, there exists $n\in\mathbb{N}$ such that we can consider these braids as elements of $B_n$. Since the conjugacy problem has a solution in $B_n$, to prove the proposition it suffices to show that $x$ is conjugate to $y$ in~$B_n$ if and only if they are conjugate in $B_\infty$. But this follows from the properties of the direct limit and the fact that the inclusions $i_n\colon B_n\to B_{n+1}$ do not merge conjugacy classes (see~\cite{meneses}).

\subsection{Proof of Theorem \ref{theorem:opma}}
 Let $p$ and $q$ be representatives of the double cosets $\mathfrak{p}\in G[n]\backslash {\rm GL}(\infty)\slash G[k]$ and $\mathfrak{q}\in G[k]\backslash {\rm GL}(\infty)\slash G[m]$, respectively. Define the sequence of double cosets \begin{gather*}
\mathfrak{r}_j = G[n] p\Theta_j[k]q G[m],
\end{gather*} in $G[n]\backslash {\rm GL}(\infty)\slash G[m]$.

We remark the following identity:

\begin{Lemma}\label{lem:thetas} If $\eta\colon B_\infty\to {\rm GL}(\infty)$ is the Burau representation, as defined in Section~{\rm \ref{sec:burau}}, the following identity holds\begin{gather*}
\Theta_j[k] = \eta(\theta_j[k]), \qquad \text{for all} \quad j,k\in \mathbb{N}.
\end{gather*}
\end{Lemma}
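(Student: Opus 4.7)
The plan is to reduce to the case $k=0$ by a shift, and then induct on $j$. If $C_k\colon B_\infty\to B_\infty$ denotes the shift homomorphism $\sigma_i\mapsto\sigma_{i+k}$, then the definition of $\theta_j[k]$ in terms of the $\tau_i^{(j)}$ gives $\theta_j[k]=C_k(\theta_j[0])$, while $\Theta_j[k]$ is manifestly obtained from $\Theta_j[0]$ by prepending a $k\times k$ identity block. Since $\eta$ intertwines $C_k$ with the corresponding block shift on matrices, it suffices to prove $\eta(\theta_j[0])=\Theta_j[0]$.

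I would argue by induction on $j$. The base case $j=1$ is immediate: $\theta_1[0]=\sigma_1$, and $\Theta_1[0]$ has $V_1=(1-t)$, $t^1\mathbf{1}_1=t$, $\mathbf{1}_1=1$, so both sides reduce to the identity with the single nontrivial $2\times 2$ Burau block $\bigl(\begin{smallmatrix} 1-t & t \\ 1 & 0 \end{smallmatrix}\bigr)$ at positions $1,2$. For the inductive step I would invoke the identity established in Step~3 of Proposition~\ref{prop:suflarg}, namely $\theta_m[0]=u\,\theta_{m+1}[0]\,\ell$ with $u^{-1}=\sigma_{m+1}\sigma_{m+2}\cdots\sigma_{2m}$ and $\ell^{-1}=\sigma_{2m+1}\sigma_{2m}\cdots\sigma_{m+2}$. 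Rearranging and applying $\eta$, combined with the inductive hypothesis $\eta(\theta_m[0])=\Theta_m[0]$, reduces the problem to the matrix identity
$$\eta(u^{-1})\,\Theta_m[0]\,\eta(\ell^{-1})=\Theta_{m+1}[0].$$

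To verify this I would first establish the two closed forms for consecutive Burau products. A short induction on the number of factors shows that $\eta(\sigma_M\sigma_{M+1}\cdots\sigma_N)$ is the identity off the $(N-M+2)\times(N-M+2)$ block at positions $M,\ldots,N+1$, where it has first row $\bigl((1-t),(1-t)t,\ldots,(1-t)t^{N-M},t^{N-M+1}\bigr)$ and $1$'s on the sub-diagonal; dually, $\eta(\sigma_N\sigma_{N-1}\cdots\sigma_M)$ has first column $\bigl((1-t),\ldots,(1-t),1\bigr)^{\rm T}$ and $t$'s on the super-diagonal. Plugging these into the product $\eta(u^{-1})\Theta_m[0]\eta(\ell^{-1})$ yields a concrete block computation in which the new row and new column of $V_{m+1}$, the enlarged block $t^{m+1}\mathbf{1}_{m+1}$, and the enlarged block $\mathbf{1}_{m+1}$ all emerge correctly.

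The main obstacle is bookkeeping: one has to carefully track index ranges when multiplying the row produced by $\eta(u^{-1})$ against the $V_m$ and $t^m\mathbf{1}_m$ columns of $\Theta_m[0]$, and similarly for the column produced by $\eta(\ell^{-1})$. The new geometric series in $t$ that appear must be recognized as the entries $(1-t)(1,t,\ldots,t^m)$ of $V_{m+1}$; this is the only genuinely computational step, and once the two closed-form formulas above are in hand, it is routine.
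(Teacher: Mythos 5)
The paper itself states Lemma~\ref{lem:thetas} without proof (it is introduced only as a ``remark''), so there is no argument of the author's to compare yours against; your proposal supplies a proof where the paper has none, and the strategy is sound. The reduction to $k=0$ via the shift $C_k$ is valid (both $\eta\circ C_k$ and block-shift-after-$\eta$ are homomorphisms agreeing on generators), the base case is right, and the two closed forms you state for $\eta(\sigma_M\sigma_{M+1}\cdots\sigma_N)$ and $\eta(\sigma_N\sigma_{N-1}\cdots\sigma_M)$ are correct; I checked that with them the block computation $\eta\big(u^{-1}\big)\Theta_m[0]\eta\big(\ell^{-1}\big)=\Theta_{m+1}[0]$ does come out as claimed. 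One concrete slip: you transcribed $\ell^{-1}=\tau_m^{(m+1)}$ as $\sigma_{2m+1}\sigma_{2m}\cdots\sigma_{m+2}$, but with $\beta=0$ the definition gives $\tau_m^{(m+1)}=\sigma_{2m+1}\sigma_{2m}\cdots\sigma_{m+1}$, a descending word of $m+1$ letters ending at $\sigma_{m+1}$. With your version the relation $\theta_{m+1}[0]=u^{-1}\theta_m[0]\ell^{-1}$ already fails for $m=1$ (it would give $\sigma_2\sigma_1\sigma_3$ instead of $\theta_2[0]=\sigma_2\sigma_1\sigma_3\sigma_2$), and the matrix sizes in your block computation would then be off by one. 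Fix that endpoint and the argument goes through. A marginally shorter alternative, in the same spirit, is to induct directly on $j$ using $\theta_{j+1}[k]=w\,\theta_j[k]\,s$ with $w=\sigma_{j+k+1}\cdots\sigma_{2j+k+1}$ and $s=\sigma_{2j+k}\cdots\sigma_{j+k+1}$ (the identity used in the last proposition of Section~\ref{sec:connexions}), which needs only the two closed forms you already derived.
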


\begin{Proposition} The sequence $\mathfrak{r}_j$ above is eventually constant and its limit does not depend on the choice of representatives. \end{Proposition}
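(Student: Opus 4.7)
The plan is to mirror the braid-case arguments in the proofs of Propositions \ref{prop:suflarg} and \ref{prop:rep}, transferring the main identities via Lemma \ref{lem:thetas} and supplementing them with a direct block-matrix computation where needed.

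For the eventual constancy, I would first transfer Step 3 of the proof of Proposition \ref{prop:suflarg} through $\eta$: setting $U = \eta(u)$ and $L = \eta(\ell)$ for the braids $u, \ell \in B_\infty[j+k]$ appearing there, Lemma \ref{lem:thetas} yields the matrix decomposition
\begin{gather*}
\Theta_j[k] = U\, \Theta_{j+1}[k]\, L, \qquad U, L \in G[j+k].
\end{gather*}
Taking $j \geq j_0 = \max\{n, m, k+N\}$, where $N$ is as in the remark following the statement of Theorem \ref{theorem:opma}, we have $U \in G[n]$ and $L \in G[m]$ (since $G[j+k] \subseteq G[n] \cap G[m]$ whenever $k+j \geq \max\{n,m\}$), and both commute with $p$ and $q$ because their nontrivial positions all lie past index $k+N$. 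Consequently
\begin{gather*}
G[n]\, p\, \Theta_j[k]\, q\, G[m] = G[n]\, p\, U\, \Theta_{j+1}[k]\, L\, q\, G[m] = G[n]\, p\, \Theta_{j+1}[k]\, q\, G[m],
\end{gather*}
giving $\mathfrak{r}_j = \mathfrak{r}_{j+1}$ for all $j \geq j_0$.

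For independence of representatives, the strategy is to prove a GL-analogue of Lemma \ref{lem:comb}: if $d \in G[k]$ acts nontrivially only on positions $k+1, \ldots, k+j$, and $C_j(d)$ denotes the matrix obtained by shifting the nontrivial block of $d$ by $j$ positions to the right and down, then
\begin{gather*}
\Theta_j[k]\, d = C_j(d)\, \Theta_j[k].
\end{gather*}
Writing $d$ as a $4\times 4$ block matrix of sizes $k, j, j, \infty$ with blocks $\mathbf{1}_k, X, \mathbf{1}_j, \mathbf{1}_\infty$ on the diagonal, a direct multiplication reduces this identity to $V_j X = V_j$, and since every row of $V_j$ equals $(1-t)(1, t, \ldots, t^{j-1})$, this follows from the defining condition $v^{\rm T} X = v^{\rm T}$ of $G[k]$. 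With this commutation lemma in hand, the argument of Proposition \ref{prop:rep} translates verbatim: for $p' = r p h$ and $q' = k' q s$ with $r \in G[n]$, $h, k' \in G[k]$, $s \in G[m]$, choose $j$ large enough that both $h$ and $k'$ act only on the first $j$ positions past index $k$. Then $\bar{h} = C_j(h^{-1})$ and $\bar{k} = C_j((k')^{-1})$ lie in $G[k+j] \subseteq G[m] \cap G[n]$, commute with $p, q, h, k'$, and the identity $\bar{k}\, p h\, \Theta_j[k]\, k' q\, \bar{h} = p\, \Theta_j[k]\, q$ holds by the same chain of equalities as in Proposition \ref{prop:rep}, showing $\mathfrak{r}'_j = \mathfrak{r}_j$ for $j$ sufficiently large.

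The main obstacle is establishing the GL-analogue of Lemma \ref{lem:comb} for arbitrary $d \in G[k]$ rather than only for $d$ in the image of $\eta$: Lemma \ref{lem:thetas} transfers the braid commutation only to the subgroup $\eta(B_\infty)$, so one must verify by hand that the defining conditions $v^{\rm T}X = v^{\rm T}$ and $Xu = u$ of $G[k]$ are precisely what make the block identity $\Theta_j[k]\, d = C_j(d)\, \Theta_j[k]$ hold. Once this is in place, the rest is a faithful translation of the braid arguments through the Burau representation.
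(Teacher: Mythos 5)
Your proposal is correct and follows essentially the same route as the paper: eventual constancy is obtained by pushing the braid identity $\theta_i[k]=u\,\theta_{i+1}[k]\,\ell$ through $\eta$, and independence of representatives rests on the block computation showing that moving an element of $G[k]$ past $\Theta_M[k]$ shifts its nontrivial block, which the paper carries out as the single identity $J\Theta_M[k]H=H'\Theta_M[k]J'$ using $V_Mh=V_M$ and $jV_M=V_M$. The only nuance is that both defining conditions of $G[k]$ are needed --- $v^{\rm T}X=v^{\rm T}$ gives $V_jX=V_j$ for one side of the commutation and $Xu=u$ gives $XV_j=V_j$ for the other --- a point your closing paragraph acknowledges even though your displayed reduction mentions only the first.
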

\begin{proof}
Let $N\in \mathbb{N}$ be such that $N>\max\{m,k,n\}$ and $p$ and $q$ can be written as square $(k+N+\infty)$-matrices with the following block configuration: \begin{gather*}
p = \begin{pmatrix}
a&b&0\\
c&d&0\\
0&0&1_\infty
\end{pmatrix},\qquad
q = \begin{pmatrix}
x&y&0\\
z&w&0\\
0&0&1_\infty
\end{pmatrix}.
\end{gather*}Suppose that for some $i\geq N$ we have $\mathfrak{r}_i = \mathfrak{r}_N$. We show that $\mathfrak{r}_i = \mathfrak{r}_{i+1}$. As we saw in Proposition~\ref{prop:suflarg}, there are elements $u,l\in B_\infty$ such that $\theta_i[k] = u\theta_{i+1}[k]l$. Hence, if $U = \eta(u)$ and $L = \eta(l)$, we have \begin{gather*}
\Theta_i[k] = U\Theta_{i+1}[k]L.
\end{gather*} Furthermore, $U$ and $L$ have the following block configuration \begin{gather*}
U = \begin{pmatrix}
1_k&0&0&0\\
0&1_i&0&0\\
0&0&\nu&0\\
0&0&0&1_\infty
\end{pmatrix}, \qquad L = \begin{pmatrix}
1_k&0&0&0\\
0&1_i&0&0\\
0&0&\lambda&0\\
0&0&0&1_\infty
\end{pmatrix}.
\end{gather*} Thus, \begin{gather*}
Up = pU\qquad\text{and}\qquad Lq = qL.
\end{gather*} Consequently, \begin{gather*}
p\Theta_{i}[k]q = pU\Theta_{i+1}[k]Lq = Up\Theta_{i+1}[k]qL.
\end{gather*} Since $U$ and $L$ are elements of the image of the Burau representation $\eta$, we have that $U,L\in G[k]$ and therefore \begin{gather*}
\mathfrak{r}_{i+1} = G[n] p\Theta_{i+1}[k]q G[m] = G[n] Up\Theta_{i+1}[k]qL G[m] = G[n] p\Theta_{i}[k]q G[m] = \mathfrak{r}_i.
\end{gather*}

To show that the limit of the sequence $\mathfrak{r_i}$ does not depend on the choice of representatives it suffices to show that, for any $H$ and $J$ in $G[k]$, we have \begin{gather*}
 \lim G[n] p\Theta_i[k]q G[m] = \lim G[n] pJ\Theta_i[k]Hq G[m].
\end{gather*}
Let $N>0$ be as before. Consider $M>N$ such that $H$ and $J$ are square $(k+M+\infty)$-matrices with the block configuration: \begin{gather*}
H = \begin{pmatrix}
1_k&0&0\\
0&h&0\\
0&0&1_\infty
\end{pmatrix},\qquad
J = \begin{pmatrix}
1_k&0&0\\
0&j&0\\
0&0&1_\infty
\end{pmatrix}.
\end{gather*}Since $H$ preserves the vector $v$, we have that $V_{M}h = V_M$. Similarly, $jV_M = V_M$. Therefore,
\begin{gather*}
J\Theta_M[k]H = \begin{pmatrix}
1_k&0&0&0\\
0&j&0&0\\
0&0&1_M&0\\
0&0&0&1_\infty
\end{pmatrix}
\begin{pmatrix}
1_k&0&0\\
0&V_M&t^M1_M&0\\
0&1_M&0&0\\
0&0&0&1_\infty
\end{pmatrix}
\begin{pmatrix}
1_k&0&0&0\\
0&h&0&0\\
0&0&1_M&0\\
0&0&0&1_\infty
\end{pmatrix}\\
\hphantom{J\Theta_M[k]H} =\begin{pmatrix}
1_k&0&0&0\\
0&jV_Mh&t^Mh&0\\
0&j&0&0\\
0&0&0&1_\infty
\end{pmatrix} =
\begin{pmatrix}
1_k&0&0&0\\
0&V_M&t^Mh&0\\
0&j&0&0\\
0&0&0&1_\infty
\end{pmatrix}\\
\hphantom{J\Theta_M[k]H}
= \begin{pmatrix}
1_k&0&0&0\\
0&1_M&0&0\\
0&0&h&0\\
0&0&0&1_\infty
\end{pmatrix}\begin{pmatrix}
1_k&0&0&0\\
0&V_M&t^M1_M&0\\
0&1_M&0&0\\
0&0&0&1_\infty
\end{pmatrix}\begin{pmatrix}
1_k&0&0&0\\
0&1_M&0&0\\
0&0&j&0\\
0&0&0&1_\infty
\end{pmatrix}.
\end{gather*}
Call $J'$ the new matrix containing the block $j$ and $H'$ the new matrix containing the block $h$. Then we have \begin{gather*}
pJ\Theta_M[k]Hq = pH'\Theta_M[k]J'q = H'p\Theta_M[k]qJ'.
\end{gather*} Therefore, $p\Theta_M[k]q$ and $pJ\Theta_M[k]Hq$ belong to the same double coset for $M$ sufficiently large.\end{proof}

Therefore, we have a well-defined product of the double cosets $\mathfrak{p}$ and $\mathfrak{q}$ given by \begin{gather*}
\mathfrak{p}\star_t\mathfrak{q} = \lim G[n] p\Theta_j[k]q G[m].
\end{gather*}

\begin{Proposition}The operation defined above is associative. Furthermore, the Burau representation is a functor between the categories of double cosets of ${\rm GL}(\infty)$ and of $B_\infty$.
\end{Proposition}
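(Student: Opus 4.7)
The plan is to split the proposition into two independent claims: associativity of $\star_t$, and functoriality of the map on double cosets induced by $\eta$. I handle the associativity first, since the functor claim follows almost immediately from Lemma~\ref{lem:thetas}.

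For the associativity claim, I would mirror the strategy of Proposition~\ref{th:asso} in the matrix setting. Given representatives $a, b, c$ of double cosets $\mathfrak{a}\in G[n]\backslash {\rm GL}(\infty)/G[k]$, $\mathfrak{b}\in G[k]\backslash {\rm GL}(\infty)/G[m]$ and $\mathfrak{c}\in G[m]\backslash {\rm GL}(\infty)/G[l]$, the two associated products take the form
\begin{gather*}
(\mathfrak{a}\star_t\mathfrak{b})\star_t\mathfrak{c} = G[n]\, a\,\Theta_j[k]\,b\,\Theta_{j'}[m]\,c\, G[l], \\
\mathfrak{a}\star_t(\mathfrak{b}\star_t\mathfrak{c}) = G[n]\, a\,\Theta_{i}[k]\,b\,\Theta_{i'}[m]\,c\, G[l],
\end{gather*}
for sufficiently large parameters. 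By Lemma~\ref{lem:thetas}, each $\Theta_s[r] = \eta(\theta_s[r])$, so every factorisation of the $\theta_s[r]$ used in the braid proof (the identities $\theta_k[\beta]=R_1P$, $\theta_l[\gamma]=P_3R_3$, the decomposition $P_4=R_5W$, and so on) transfers via $\eta$ to a matrix factorisation, each $\eta(R_i)$ lying in the appropriate $G[k+1]$ or $G[2k+1]$. The commutation steps in Proposition~\ref{th:asso} rely on two kinds of moves: commutation of high-support braids with $a, b, c$, and the exchange identities of Lemma~\ref{lem:comb}. The second kind transfers directly through $\eta$. The first kind transfers because $a, b, c$ are finitely-supported matrices, and any element of $\eta(B_\infty[r])$ with $r$ larger than their support is the identity on the block where $a, b, c$ act non-trivially, so it commutes with them. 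With this dictionary in place, every step of the proof of Proposition~\ref{th:asso} has a verbatim matrix analogue, yielding associativity of $\star_t$.

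For the functor claim, I would define $\bar{\eta}$ on morphisms by $\bar{\eta}(B_\infty[s]gB_\infty[r]) = G[s]\eta(g)G[r]$; this is well-defined because $\eta(B_\infty[r])\subseteq G[r]$ and $\eta$ is a group homomorphism. Preservation of the identity morphism is immediate. For composition, let $\mathfrak{p}\in B_\infty[\alpha]\backslash B_\infty/B_\infty[\beta]$ and $\mathfrak{q}\in B_\infty[\beta]\backslash B_\infty/B_\infty[\gamma]$, and pick representatives $p, q$. For $n$ large enough, we compute
\begin{gather*}
\bar{\eta}(\mathfrak{p}\circ\mathfrak{q}) = G[\alpha]\,\eta(p\,\theta_n[\beta]\,q)\,G[\gamma] = G[\alpha]\,\eta(p)\,\Theta_n[\beta]\,\eta(q)\,G[\gamma] = \bar{\eta}(\mathfrak{p})\star_t\bar{\eta}(\mathfrak{q}),
\end{gather*}
where the middle equality uses Lemma~\ref{lem:thetas} and the last equality uses that the sequence defining $\star_t$ is eventually constant.

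The main obstacle is the transfer of commutation arguments in the associativity step. Although Lemma~\ref{lem:thetas} gives a clean identification $\Theta_j[k]=\eta(\theta_j[k])$, the subgroup $G[k]$ is strictly larger than $\eta(B_\infty[k])$, and the matrices $a, b, c$ themselves need not lie in the image of $\eta$. What makes the argument go through is that the critical commutations in Proposition~\ref{th:asso} always involve elements $\eta(R_i)$ whose preimages $R_i$ lie in $B_\infty[r]$ for $r$ larger than $\spp a, \spp b, \spp c$: the resulting matrices are the identity on the finite block where $a, b, c$ act, so they commute with $a, b, c$ regardless of the origin of those representatives. This observation is what lets the combinatorial skeleton of the braid proof carry over intact.
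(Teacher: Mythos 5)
Your proposal is correct and follows essentially the same route as the paper, which simply states that associativity is proved by transferring the argument of Proposition~\ref{th:asso} through the identity $\Theta_j[k]=\eta(\theta_j[k])$ of Lemma~\ref{lem:thetas}, and that functoriality follows from the same lemma. You in fact supply more detail than the paper does, correctly isolating the one point needing care (that $a$, $b$, $c$ need not lie in the image of $\eta$, but the relevant $\eta(R_i)$ are identity on the finite block supporting them and lie in the appropriate $G[r]$).
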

\begin{proof}The proof of the associative property is analogous to the proof of Theorem~\ref{th:asso}, using Lemma \ref{lem:thetas}. The functoriality follows from Lemma \ref{lem:thetas}.
\end{proof}

\section{Connections with other direct limits of groups}\label{sec:connexions}

We can extend the above constructions to the product $G^{[n]} = B_\infty\times\cdots \times B_\infty$ of $n$ copies of the infinite braid group. Let $K$ be the diagonal subgroup of $G^{[n]}$. Clearly, $K$ is naturally isomorphic to $B_\infty$. Let $K[\alpha]$ be the image of $B_\infty[\alpha]$ under this isomorphism. We define the product of double cosets componentwise.
 \begin{Corollary}Consider two double cosets \begin{gather*}
\mathfrak{p}\in K[\alpha]\backslash G^{[n]}\slash K[\beta], \qquad \mathfrak{q}\in K[\beta]\backslash G^{[n]}\slash K[\gamma],
\end{gather*} and let $p$ and $q$ be their respective representatives. Then the operation given by \begin{gather*}
\mathfrak{p}\circ \mathfrak{q} = K[\alpha] p\theta_j[\beta]q K[\beta],
\end{gather*} for $j$ sufficiently large, is well-defined and associative.
\end{Corollary}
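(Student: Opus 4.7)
The plan is to reduce the corollary to Theorem~\ref{theorem:opbr} and its supporting propositions by a componentwise argument. Write $p=(p_1,\ldots,p_n)$ and $q=(q_1,\ldots,q_n)$ in $G^{[n]}$, and interpret $\theta_j[\beta]$ as its diagonal image in $K$, so that multiplication by $\theta_j[\beta]$ acts simultaneously on all $n$ coordinates. It is natural to define $\spp p = \max_i \spp p_i$.

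For eventual constancy of the sequence $\mathfrak{r}_j = K[\alpha]\, p\, \theta_j[\beta]\, q\, K[\gamma]$, I would mimic the proof of Proposition~\ref{prop:suflarg} step by step. The key identity $\theta_m[\beta] = u\,\theta_{m+1}[\beta]\,\ell$ with $u,\ell\in B_\infty[m+\beta]$ lifts under the diagonal embedding to an identity in $G^{[n]}$ whose factors lie in $K[m+\beta]$. Taking $M=\max\{\spp p,\spp q,\alpha,\gamma\}+1$ and $m\geq M$, the diagonal images of $u$ and $\ell$ lie in $K[\alpha]$ and $K[\gamma]$ respectively, and in each coordinate the scalar elements $u,\ell$ commute with $p_i$ and $q_i$ by the support hypothesis. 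Hence $u(p\,\theta_{m+1}[\beta]\,q)\ell = p\,\theta_m[\beta]\,q$, which gives $\mathfrak{r}_m = \mathfrak{r}_{m+1}$ and eventual constancy.

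For independence of representatives, write $p'=rph$ and $q'=kqs$ with $r\in K[\alpha]$, $h,k\in K[\beta]$, $s\in K[\gamma]$; all four are diagonal. Introducing $\bar h = C_j(h^{-1})$ and $\bar k = C_j(k^{-1})$ (understood diagonally) as in Proposition~\ref{prop:rep}, the chain of equalities $\bar k\, p h\, \theta_j[\beta]\, k q\, \bar h = p\,\theta_j[\beta]\,q$ goes through verbatim in $G^{[n]}$: each step is either a commutation based on disjoint supports, which holds coordinatewise, or an application of the identity $h\,\theta_j[\beta] = \theta_j[\beta]\,C_j(h)$ from Lemma~\ref{lem:comb}, which again lifts diagonally. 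Associativity is then handled analogously from the proof of Proposition~\ref{th:asso}: the auxiliary factors $P,P_i,R_i,E,L,C,D,A,W,\tilde W$ are single braids whose diagonal images lie in $K$, and every equivalence $\equiv$ invoked there is a left- or right-multiplication by an element of $K[\alpha]$ or $K[\delta]$.

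The main obstacle is really only a point of care: $K[\alpha]$ is the diagonal copy of $B_\infty[\alpha]$ rather than the full product $B_\infty[\alpha]^n$, so every auxiliary element used to manipulate double cosets must be diagonal. Fortunately every element appearing in the proofs of Propositions~\ref{prop:suflarg}, \ref{prop:rep}, and~\ref{th:asso} is built from elementary braids in $B_\infty$ and multiplied into both sides of $p$ and $q$ simultaneously, so its natural embedding into $G^{[n]}$ is diagonal. With this observation the corollary is a direct consequence of the $n=1$ case already established.
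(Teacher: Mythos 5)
Your proposal is correct and follows essentially the same route as the paper, which simply cites Propositions~\ref{prop:suflarg}, \ref{prop:rep} and~\ref{th:asso}; your additional observation that every auxiliary element in those proofs is a single braid acting diagonally, hence lies in the diagonal subgroup $K$ rather than merely in $B_\infty[\alpha]^n$, is exactly the point that makes the componentwise reduction legitimate.
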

\begin{proof}It follows from Propositions \ref{prop:suflarg}, \ref{prop:rep} and~\ref{th:asso}.
\end{proof}

When there is a surjective homomorphism $B_\infty$ onto a group $A$, we have an induced operation on the double cosets of $A$. More precisely, let $\psi\colon B_\infty \to A$ be a surjective homomorphism and consider, for each $\alpha\in \mathbb{N}$, the image $A[\alpha]$ of the subgroup $B_\infty[\alpha]$ by $\psi$. Then the induced product on the double cosets of $A$ with relation to the subgroups $A[\alpha]$ is well-defined. Indeed, this follows from the fact that the sequence used to define the product of double cosets in~$B_\infty$ not only converges, it becomes constant.

Let $S_\infty$ denote the infinite symmetric group, that is, the set of permutations $s\colon \mathbb{N}^\ast\to\mathbb{N}^\ast$ such that $s(i) = i$ for all but finitely many $i\in\mathbb{N}^\ast$ equipped with the composition of functions. Consider the subgroups $S_\infty[\alpha]\subset S_\infty$, $\alpha\in\mathbb{N}$ consisting of the elements fixing the set $\{1,2,\ldots,\alpha\}$ pointwise. In~\cite{neretin4}, Neretin defined an operation on the double cosets of $S_\infty$ with relation to the subgroups $S_\infty[\alpha]$ as follows: For integers $\beta\geq 0$ and $n>0$, denote by $\theta^s_n[\beta]$ the permutation given by \begin{gather*}
\theta^s_n[\beta](i) = \begin{cases} i+n, & \beta< i \leq \beta+n,\\
i-n,& \beta+n< i \leq \beta + 2n,\\
i, & \text{otherwise}.\end{cases}
\end{gather*} Given double cosets \begin{gather*}
\mathfrak{p}\in S_\infty[\alpha]\backslash S_\infty /S_\infty[\beta]\qquad\text{and}\qquad S_\infty[\beta]\backslash S_\infty /S_\infty[\gamma],
\end{gather*} and their representatives $p\in\mathfrak{p}$ and $q\in\mathfrak{q}$, define their product as \begin{gather*}
\mathfrak{p}\circ^{\!s}\mathfrak{q} = S_\infty[\alpha] p\theta^s_n[\beta]q S_\infty[\gamma],
\end{gather*} for sufficiently large~$n$.

On the other hand, consider the canonical homomorphism $j\colon B_\infty\to S_\infty$ that associates to each braid the corresponding permutation of its endpoints. It is clear that this is a surjective homomorphism (it is, up to conjugacy, the only surjective homomorphism from $B_\infty$ to $S_\infty$, see~\cite{artin2}) and hence induces an operation on the double cosets of~$S_\infty$ with relation to the subgroups $j(B_\infty[\alpha])$, $\alpha\in\mathbb{N}$. Furthermore, it is easy to check that $j(B_\infty[\alpha]) = S_\infty[\alpha]$ for each $\alpha\in\mathbb{N}$.

\begin{Proposition} The operation on double cosets of $S_\infty$, with relation to the subgroups $S_\infty[\alpha]$, $\alpha\in\mathbb{N}$, coincides with the operation induced by the group $B_\infty$.
\end{Proposition}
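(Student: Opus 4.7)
The plan is to reduce the proposition to verifying the single identity of permutations $j(\theta_n[\beta]) = \theta^s_n[\beta]$ in $S_\infty$.

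First I would unpack the induced operation. Since $j$ is surjective and $j(B_\infty[\alpha]) = S_\infty[\alpha]$, the map on double cosets induced by $j$ sends each $B_\infty[\alpha]\tilde pB_\infty[\beta]$ to $S_\infty[\alpha]\,j(\tilde p)\,S_\infty[\beta]$. Combined with Definition~\ref{def:prodnavy}, this gives the explicit form
\begin{equation*}
\mathfrak{p}\cdot\mathfrak{q} = S_\infty[\alpha]\,p\,j(\theta_n[\beta])\,q\,S_\infty[\gamma]
\end{equation*}
for any representatives $p\in\mathfrak{p}$, $q\in\mathfrak{q}$ in $S_\infty$ and any $n$ large enough (the choice of lifts of $p$ and $q$ to $B_\infty$ is irrelevant by the well-definedness already discussed in the paragraph introducing the induced operation). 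Neretin's product being $\mathfrak{p}\circ^s\mathfrak{q} = S_\infty[\alpha]\,p\,\theta^s_n[\beta]\,q\,S_\infty[\gamma]$, the proposition will follow as soon as we establish the identity $j(\theta_n[\beta]) = \theta^s_n[\beta]$ as permutations of $\mathbb{N}^*$.

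Second, I would compute each factor $j(\tau_i^{(n)})$. Applying $j$ term by term to $\tau_i^{(n)} = \sigma_{n+\beta+i}\sigma_{n+\beta+i-1}\cdots\sigma_{\beta+i+1}$ and using $j(\sigma_k) = (k,k+1)$, a short unwinding shows that $j(\tau_i^{(n)})$ is the $(n+1)$-cycle on $\{\beta+i+1,\ldots,\beta+i+n+1\}$ which sends $\beta+i+1$ all the way across to $\beta+i+n+1$; geometrically it is the sliding of one strand past the next $n$.

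Third, I would compose these $n$ cycles. Tracking an arbitrary point $\beta+r$ with $1\le r\le n$ through the product $j(\tau_0^{(n)})\,j(\tau_1^{(n)})\cdots j(\tau_{n-1}^{(n)})$, the supports of the successive cycles overlap in exactly the right way so that $\beta+r$ is transported step by step to $\beta+n+r$; a symmetric calculation sends $\beta+n+r$ back to $\beta+r$; and every point outside $\{\beta+1,\ldots,\beta+2n\}$ lies outside every support and is therefore fixed. These are precisely the formulas defining $\theta^s_n[\beta]$, finishing the proof.

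The main (and really the only) step of substance is the bookkeeping in the final composition, but it reduces to comparing a handful of index inequalities among $\beta$, $n$, $r$ and the cycle endpoints, so I do not expect any essential obstacle.
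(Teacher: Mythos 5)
Your proposal follows exactly the paper's route: reduce the statement to the single identity $j(\theta_n[\beta])=\theta^s_n[\beta]$ (the paper asserts this ``follows directly from the definitions''), and your explicit computation of the cycles $j\big(\tau_i^{(n)}\big)$ and their composition correctly verifies it. The argument is correct and is essentially the same as the paper's, only with the final bookkeeping written out.
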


\begin{proof}In fact, it suffices to check that the elements $\theta^s_n[\beta]$ and $j(\theta_n[\beta])$ coincide for all integers $n>0$ and $\beta\geq 0$. But this identity follows directly from the definition of $\theta_n[\beta]$ and~$j$.
\end{proof}

As a last remark, we point out some similarities between the multiplicative structure defined in $B_\infty$ and that of $\operatorname{Aut}(F_\infty)$.
The group $\operatorname{Aut}(F_\infty)$ is defined as follows: Let $F_n$ be the free group with~$n$ generators $x_1,\ldots,x_n$ and denote by $Aut(F_n)$ the group of automorphisms of $F_n$. Then \begin{gather*}
\operatorname{Aut}(F_\infty) = \lim \operatorname{Aut}(F_n).
\end{gather*} The limit is taken with relation to the obvious inclusion $\operatorname{Aut}(F_n)\to\operatorname{Aut}(F_{n+1})$.

For each $\alpha\in\mathbb{N}$ consider the subgroup $H(\alpha)$ of $\operatorname{Aut}(F_\infty)$ of automorphisms $h$ such that $h(x_i) = x_i$ for $i\leq\alpha$. In~\cite{neretin6}, it is defined a product on the double cosets of $\operatorname{Aut}(F_\infty)$ in the following way: Consider the automorphism $\vartheta_j[\beta]\in\operatorname{Aut}(F_\infty)$ given by \begin{gather*}
\vartheta_j[\beta](x_i) = \begin{cases} x_i, &i\leq \beta, i>2j+\beta,\\
x_{i+j}, & \beta<i\leq \beta+j,\\
x_{i-j}, & \beta+j<i\leq \beta+2j.\end{cases}
\end{gather*} Then, for $p$ and $q$ in $\operatorname{Aut}(F_\infty)$, the product of the double cosets $H(\alpha) p H(\beta)$ and $H(\beta) q H(\gamma)$ is the double coset limit of the sequence $p\vartheta_j[m]q$ in $H(\alpha)\backslash \operatorname{Aut}(F_\infty)\slash H(\gamma)$.

 For each $n\in\mathbb{N}$, let $i_n\colon B_n\to \operatorname{Aut}(F_n)$ be the \emph{Artin representation} of $B_n$ on the free group~$F_n$, given by \begin{gather*}
i_n(\sigma_j)(x_k) = \begin{cases} x_j, & k=j+1,\\
x_jx_{j+1}x_j^{-1},& k=j,\\
x_k, & \text{otherwise}.\end{cases}
\end{gather*}This representation is faithful and therefore we can identify $B_n$ with the image of $i_n$ in $\operatorname{Aut}(F_n)$. Consider the limit homomorphism $i_\infty\colon B_\infty\to \operatorname{Aut}(F_\infty)$.
The element $\vartheta_j[m]$ is related to the image of the element $\theta_j[m]$ as we see in the following proposition
\begin{Proposition}Let $\beta$ be a fixed positive integer. For each $k\in \mathbb{N}$, consider the element $y_k = x_{\beta +k}x_{\beta + k - 1}\cdots x_{\beta+1}\in F_\infty$. Then \begin{gather*}
i_\infty(\theta_k[\beta])(x_i) = \begin{cases}
x_i,& i\leq \beta, i>2k+\beta,\\
y_k^{-1}x_{i+k}y_k,& \beta+1\leq i\leq k+\beta,\\
x_{i-k},& k+\beta< i\leq 2k+\beta.
\end{cases}
\end{gather*} In other words, \begin{gather*}
i_\infty(\theta_k[\beta])(x_i) = \begin{cases}
y_k^{-1}\vartheta_k[\beta](x_{i})y_k,& \beta+1\leq i\leq k+\beta,\\
\vartheta_k[\beta](x_{i}),& \text{otherwise}.
\end{cases}
\end{gather*}
\end{Proposition}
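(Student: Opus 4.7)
The plan is a direct computation of $i_\infty(\theta_k[\beta])$ on each generator $x_i$, using the factorization $\theta_k[\beta]=\tau_0^{(k)}\tau_1^{(k)}\cdots\tau_{k-1}^{(k)}$ together with the Artin formulas. The first preparatory step is a local lemma describing the action of a single sliding block $\tau_i^{(k)}=\sigma_{\beta+i+k}\sigma_{\beta+i+k-1}\cdots\sigma_{\beta+i+1}$. By induction on the number of elementary factors in $\tau_i^{(k)}$, using the relation $\sigma_j(x_j)=x_j x_{j+1}x_j^{-1}$ to absorb each new letter into a growing conjugator, one finds that $i_\infty(\tau_i^{(k)})$ fixes $x_\ell$ for $\ell<\beta+i+1$ or $\ell>\beta+i+k+1$, sends $x_\ell\mapsto x_{\ell-1}$ for $\beta+i+2\le \ell\le \beta+i+k+1$, and sends $x_{\beta+i+1}\mapsto w_i\,x_{\beta+i+k+1}\,w_i^{-1}$, where $w_i=x_{\beta+i+1}x_{\beta+i+2}\cdots x_{\beta+i+k}$.

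With this in hand, the three cases of the proposition are handled separately. For $i\le\beta$ or $i>2k+\beta$, no $\sigma_j$ appearing in $\theta_k[\beta]$ has $x_i$ in its support, so the generator is fixed. For $i=\beta+k+r$ with $1\le r\le k$ (the second block), applying $\tau_{k-1}^{(k)},\tau_{k-2}^{(k)},\ldots,\tau_0^{(k)}$ in turn (rightmost acts first under standard composition) repeatedly invokes the shift-down-by-one rule; a short index check shows the special leftmost case is never triggered, so after $k$ iterations $x_i$ has become $x_{i-k}$, matching the formula.

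The substantive case is the first block $i=\beta+s$ with $1\le s\le k$. The rightmost $k-s$ factors $\tau_{k-1}^{(k)},\ldots,\tau_s^{(k)}$ have active range starting strictly above $\beta+s$ and so fix $x_{\beta+s}$. The next factor $\tau_{s-1}^{(k)}$ hits $x_{\beta+s}$ at its leftmost position and produces $w_{s-1}\,x_{\beta+s+k}\,w_{s-1}^{-1}$ with $w_{s-1}=x_{\beta+s}x_{\beta+s+1}\cdots x_{\beta+s+k-1}$. Each of the remaining $s-1$ factors $\tau_{s-2}^{(k)},\ldots,\tau_0^{(k)}$ has active range containing every letter of the current conjugator but excluding $x_{\beta+s+k}$, so it shifts every letter of the conjugator down by one and leaves $x_{\beta+s+k}$ untouched. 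After all these applications the conjugator has collapsed to $x_{\beta+1}x_{\beta+2}\cdots x_{\beta+k}$, a word independent of the starting value of $s$, and the conjugated letter is $x_{\beta+s+k}=x_{i+k}$, yielding the formula stated in the proposition. The second characterization in terms of $\vartheta_k[\beta]$ is then immediate by comparison with Neretin's definition.

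The main obstacle is the index bookkeeping in the first-block case: at every stage one must verify simultaneously that the entire current conjugator lies inside the active range of the next $\tau$ (so the shift-by-one rule applies uniformly to each of its letters) and that the conjugated letter $x_{\beta+s+k}$ lies outside that range (so it is unaffected). Both conditions follow from the inequalities $1\le s\le k$ combined with the interval structure $[\beta+i+1,\beta+i+k+1]$ of $\tau_i^{(k)}$, but they must be checked at each of the $s$ steps.
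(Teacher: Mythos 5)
Your route is genuinely different from the paper's. The paper proves the formula by induction on $k$, using the recursion $\theta_{k+1}[\beta]=\sigma_{k+\beta+1}\cdots\sigma_{2k+\beta+1}\,\theta_k[\beta]\,\sigma_{2k+\beta}\cdots\sigma_{k+\beta+1}$ and splitting into five cases according to where $x_i$ sits; you instead fix $k$ and push each generator through the factorization $\theta_k[\beta]=\tau_0^{(k)}\cdots\tau_{k-1}^{(k)}$, after first isolating the action of a single block $\tau_i^{(k)}$. Your local lemma is correct (it fixes $x_\ell$ outside $[\beta+i+1,\beta+i+k+1]$, shifts down by one inside, and conjugates at the leftmost position), the second-block and trivial cases check out, and the ``collapse of the conjugator'' argument in the first-block case is the right mechanism: each later $\tau_j^{(k)}$ does contain the whole current conjugator in its shift-down range while missing $x_{\beta+s+k}$, so the conjugator drifts down to a word in $x_{\beta+1},\ldots,x_{\beta+k}$ independent of $s$. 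Your approach arguably makes the role of $y_k$ more transparent than the paper's induction, at the cost of the index bookkeeping you acknowledge.

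There is, however, one concrete point where your conclusion does not match what you computed. With the Artin formulas exactly as displayed in the paper, $i_n(\sigma_j)(x_j)=x_jx_{j+1}x_j^{-1}$, your local lemma correctly gives $x_{\beta+i+1}\mapsto w_i\,x_{\beta+i+k+1}\,w_i^{-1}$ with $w_i=x_{\beta+i+1}\cdots x_{\beta+i+k}$ a \emph{positive} word conjugating on the \emph{left}, and your final answer in the first block is $\bigl(x_{\beta+1}\cdots x_{\beta+k}\bigr)x_{i+k}\bigl(x_{\beta+1}\cdots x_{\beta+k}\bigr)^{-1}$. The proposition asserts $y_k^{-1}x_{i+k}y_k$ with $y_k=x_{\beta+k}\cdots x_{\beta+1}$, i.e., $x_{\beta+1}^{-1}\cdots x_{\beta+k}^{-1}\,x_{i+k}\,x_{\beta+k}\cdots x_{\beta+1}$; these are different elements of $F_\infty$, so the closing claim that your computation ``yields the formula stated'' is not literally true. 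The discrepancy originates in the paper itself: its own base case computes $i_\infty(\sigma_{\beta+1})(x_{\beta+1})=x_{\beta+1}^{-1}x_{\beta+2}x_{\beta+1}$, which contradicts the displayed definition of $i_n(\sigma_j)$ and shows the intended action is the mirror convention $\sigma_j(x_j)=x_j^{-1}x_{j+1}x_j$. Under that convention your entire argument goes through unchanged (the shift rule $x_\ell\mapsto x_{\ell-1}$ is unaffected), the conjugators become $x_{\beta+i+k}^{-1}\cdots$ acting on the right, and the collapse produces exactly $y_k^{-1}x_{i+k}y_k$. You should state explicitly which convention you are using and carry the orientation of the conjugator consistently; as written, the last step silently reverses it.
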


\begin{proof}For $k=1$ we have that $\theta_1[\beta] = \sigma_{\beta+1}$ and therefore
\begin{gather*}
i_\infty(\theta_1[\beta])(x_i) = i_\infty(\sigma_{\beta+1})(x_i) = \begin{cases}
x_i,& i\leq \beta, i>\beta+2,\\
x_i^{-1}x_{i+1}x_i,& i=1+\beta,\\
x_{i-1},& i= 2+\beta.
\end{cases}
\end{gather*} We are going to show the truth of the identity by induction on $k$. Suppose the identity holds for $k$. We can write $\theta_{k+1}[\beta]$ as \begin{gather*}
\theta_{k+1}[\beta] = \sigma_{k+\beta+1}\cdots\sigma_{2k+\beta+1}\theta_k[\beta]\sigma_{2k+\beta}\cdots\sigma_{k+\beta+1}.
\end{gather*}If we put $w = \sigma_{k+\beta+1}\cdots\sigma_{2k+\beta+1}$ and $s = \sigma_{2k+\beta}\cdots\sigma_{k+\beta+1}$ we can re-write the equation above as \begin{gather*}
\theta_{k+1}[\beta] = w\theta_k[\beta]s.
\end{gather*} We have five cases to analyze:

{\it Case 1.} When $\beta+1\leq i\leq k+\beta$, notice that $i_\infty(s)(x_i) = x_i$ and $i_\infty(\theta_k[\beta])(x_i) = y_k^{-1}x_{i+k}y_k$, therefore $i_\infty(\theta_{k+1}[\beta])(x_i) = i_\infty(w)\big(y_k^{-1}x_{i+k}y_k\big)$. Now,
\begin{gather*}
i_\infty(w)(x_{i+k}) = i_\infty (\sigma_{k+\beta+1}\cdots\sigma_{i+k-1})i_\infty(\sigma_{i+k})(x_{i+k}) \\
\hphantom{i_\infty(w)(x_{i+k})}{} =i_\infty(\sigma_{k+\beta+1}\cdots \sigma_{i+k-2})i_\infty(\sigma_{i+k-1})\big(x_{i+k}^{-1}x_{i+k+1}x_{i+k}\big) \\
\hphantom{i_\infty(w)(x_{i+k})}{} = i_\infty(\sigma_{k+\beta+1} \cdots\sigma_{i+k-3})i_\infty(\sigma_{i+k-2})\big(x_{i+k-1}^{-1}x_{i+k+1}x_{i+k-1}\big) = \cdots \\
\hphantom{i_\infty(w)(x_{i+k})}{} = i_\infty(\sigma_{k+\beta+1})\big(x_{k+\beta+2}^{-1}x_{k+i+1}x_{k+\beta+2}\big) = x_{k+\beta+1}^{-1}x_{k+i+1}x_{k+\beta+1}.
\end{gather*} Hence, \begin{gather*}
i_\infty(w)\big(y_k^{-1}x_{i+k}y_k\big) = y^{-1}_ki_\infty(w)(x_{i+k})y_k = y^{-1}_kx_{k+\beta+1}^{-1}x_{k+i+1}x_{k+\beta+1}y_k = y_{k+1}^{-1}x_{k+i+1}y_{k+1}.
\end{gather*}

{\it Case 2.} When $i=k+\beta+1$, we have that \begin{gather*}
i_\infty(s)(x_{k+i+\beta}) = x_{k+\beta+1}^{-1}\cdots x_{2k+\beta}^{-1}x_{2k+\beta+1}x_{2k+\beta}\cdots x_{k+\beta+1}.
\end{gather*} Hence, \begin{gather*}
i_\infty(\theta_k[\beta]s)(x_{k+\beta+1}) = i_\infty(\theta_k[\beta])\big(x_{k+\beta+1}^{-1}\cdots x_{2k+\beta}^{-1}x_{2k+\beta+1}x_{2k+\beta}\cdots x_{k+\beta+1}\big) \\
\hphantom{i_\infty(\theta_k[\beta]s)(x_{k+\beta+1})}{} =x_{\beta+1}^{-1}\cdots x_{k+\beta}^{-1}x_{2k+\beta+1}x_{k+\beta}\cdots x_{\beta+1} = y_k^{-1}x_{2k+\beta+1}y_k.
\end{gather*} Furthermore, $i_\infty(w)(x_{2k+\beta+1}) = x_{k+\beta+1}^{-1}x_{2k+\beta+2}x_{k+\beta+1}$ and hence
\begin{gather*}
i_\infty(\theta_{k+1}[\beta])(x_{k+\beta+1}) = i_\infty(w)\big(y_k^{-1}x_{2k+\beta+1}y_k\big) \\
\hphantom{i_\infty(\theta_{k+1}[\beta])(x_{k+\beta+1})}{} = y^{-1}_kx_{k+\beta+1}^{-1}x_{2k+\beta+2}x_{k+\beta+1}y_k = y_{k+1}^{-1}x_{2k+\beta+2}y_{k+1}.
\end{gather*}

{\it Case 3.} When $k+\beta+1< i\leq 2k+\beta+1$, it is sufficient to notice that $i_\infty(s)(x_i) = x_{i-1}$, $i_\infty(\theta_k[\beta])(x_{i-1}) = x_{i-k-1}$ and $i_\infty(w)(x_{i-k-1}) = x_{i-k-1}$.

{\it Case 4.} For the case $i=2k+\beta+2$ we have $i_\infty(\theta_k[\beta]s)(x_{2k+\beta+2}) = x_{2k+\beta+2}$. Furthermore, $i_\infty(w)(x_{2k+\beta+2}) = x_{k+\beta+1}$.

{\it Case 5.} For $i\leq \beta$ or $i>2k+\beta+2$, we have that $i_\infty(w)(x_i) = i_\infty(\theta_k[\beta](x_i) = i_\infty(s)(x_i) = x_i$ and the result follows.
\end{proof}

Thus the elements $\vartheta_j[\beta]$ and $i_\infty(\theta_j[\beta])$ are always conjugate in $\operatorname{Aut}(F_\infty)$ (in particular, by an element of $H(\beta)$). Nevertheless $i_\infty$ does not induce a homomorphism between the monoids of double cosets. In fact, consider the braid $\omega = \sigma_2^{-1}\sigma_3\sigma_1\sigma_3\sigma_2$ in $B_\infty$ and its projection $[\omega]$ in $B[2]\backslash B_\infty\slash B[2]$. Then $i_\infty(\omega\theta_N[2]\omega)$ and $i_\infty(\omega)\vartheta_N[2] i_\infty(\omega)$ do not belong to the same double coset of $H(2)\backslash \operatorname{Aut}(F_\infty) \slash H(2)$.

\subsection*{Acknowledgements}
This research was supported by FAPESP process 2015/03341-9. We are indebted to L.~Funar and A.K.M.~Libardi for useful discussions and continuous support.

\pdfbookmark[1]{References}{ref}
\LastPageEnding

\end{document}